\documentclass{article}
\usepackage{graphicx} 
\usepackage{float}

\usepackage[utf8]{inputenc}
\usepackage{amsthm,amsfonts,amssymb,amsmath,epsf, verbatim}
\usepackage{hyperref}

\usepackage{amsthm,amsfonts,amssymb,amsmath,epsf, verbatim}

\newtheorem{theorem}{Theorem}
\newtheorem{lemma}[theorem]{Lemma}
\newtheorem{corollary}[theorem]{Corollary}
\newtheorem{proposition}[theorem]{Proposition}

\newtheorem{question}[theorem]{Question}

\newcommand{\seqnum}[1]{\href{https://oeis.org/#1}{\rm \underline{#1}}}

\title{Strongly Pseudoperfect Numbers}
\author{Audrey Wang, Joshua Zelinsky}
\date{}

\begin{document}

\begin{abstract} A positive integer $n$ is said to be strongly pseudoperfect if there is a subset $S$ of the positive divisors of $n$ such that $d \in S$ if and only if $n/d \in S$, and $\sum_{d \in S} d = 2n$. We construct new infinite families of strongly pseudoperfect numbers, and also prove new restrictions on when a positive integer $n$ can be strongly pseudoperfect.
    
\end{abstract}

\maketitle

\section{Introduction}

Let $\sigma(n)$ be the sum of the divisors of $n$. A number $n$ is said to be perfect if $\sigma(n)=2n$. The first few perfect numbers are $6$, $28$, $496$, $8128 $ and are listed as \seqnum{A000396} in the OEIS. Similarly, a number $n$ is said to be deficient if $\sigma(n) < 2n$ and abundant if $\sigma(n) > 2n$. We write $h(n)=\sigma(n)/n$. Note that $h(n)$ is frequently referred to as the abundancy of $n$. We also will write $\tau(n)$ for the number of positive divisors of $n$.

Two of the oldest unsolved problems are whether there are infinitely many even perfect numbers and whether there are any odd perfect numbers. We have the following classical result, with one direction due to Euclid, and the other to Euler. 

\begin{theorem}\label{Euclid-Euler}[Euclid, Euler] If $n$ is an even number, then $n$ is perfect if and only if $n=2^{p-1}(2^p-1)$ where $2^p-1$ is prime. 
\end{theorem}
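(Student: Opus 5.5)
The plan is to prove the two directions separately, using only that $\sigma$ is multiplicative and that $\sigma(m)\ge m+1$ for $m>1$, with equality exactly when $m$ is prime.

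\emph{Euclid's direction.} Suppose $n=2^{p-1}(2^p-1)$ with $q=2^p-1$ prime. Since $q$ is odd, $\gcd(2^{p-1},q)=1$, and multiplicativity gives
\[
\sigma(n)=\sigma(2^{p-1})\,\sigma(q)=(2^p-1)\cdot 2^p=2n .
\]
So $n$ is perfect.

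\emph{Euler's direction.} Let $n$ be an even perfect number.

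1. Write $n=2^{k}m$ with $k\ge 1$ and $m$ odd. Multiplicativity turns $\sigma(n)=2n$ into
\[
(2^{k+1}-1)\,\sigma(m)=2^{k+1}m .
\]

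2. Since $2^{k+1}-1$ is odd and coprime to $2^{k+1}$, it must divide $m$. Write $m=(2^{k+1}-1)c$, so that $\sigma(m)=2^{k+1}c$.

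3. Note that $2^{k+1}c=m+c$. This is the key step, and the only one needing care: we must identify the divisors of $m$ precisely from this identity.

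4. Suppose $c>1$. Then $1$, $c$ and $m$ are distinct divisors of $m$; they are distinct because $2^{k+1}-1\ge 3$ forces $m>c$. Hence $\sigma(m)\ge m+c+1>m+c$, a contradiction.

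5. Therefore $c=1$, which gives $\sigma(m)=m+1$. This forces $m$ to be prime, since any $m>1$ with an extra divisor would have $\sigma(m)>m+1$ (and $m=2^{k+1}-1\ge 3$, so $m>1$).

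6. Setting $p=k+1$, we get $n=2^{p-1}(2^p-1)$ with $2^p-1$ prime.

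(Primality of $2^p-1$ also forces $p$ itself to be prime, but that fact is not needed for the statement.)
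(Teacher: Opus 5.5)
Your proof is correct and complete. It is the classical Euler argument, and you handle its delicate points: $m>c$ because $2^{k+1}-1\ge 3$ when $k\ge 1$, and $m\ge 3$, so $\sigma(m)=m+1$ really does force $m$ to be prime.

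The paper gives no proof of this theorem; it quotes it as a classical result, so there is no argument to compare yours against directly. The nearest argument in the paper is the proof of Theorem~\ref{Mersenne result}, which the authors describe as a generalization of Euler's direction, but it works in the opposite way.

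\begin{itemize}
\item \textbf{Paper's route.} It assumes the Mersenne shape $n=2^k(2^m-1)$ from the outset and only has to show $k+1=m$. It computes the omitted sum $A=2^{k+1}-2^m$. It then shows, by a $2$-adic and size analysis of which divisor pairs could be omitted, that no admissible set of omitted pairs sums to $A$ when $A>0$.
\item \textbf{Your route.} You derive the shape of the odd part from the exact identity $\sigma(n)=2n$, using the coprimality step $2^{k+1}-1\mid m$ and the identity $\sigma(m)=m+c$.
\end{itemize}

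That exact identity is what is unavailable in the strongly pseudoperfect setting. This is why the paper must presuppose the form of $n$ rather than deduce it as you do.
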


Primes of the form $2^p-1$ are called Mersenne primes. It is not hard to show that if $2^p-1$ is prime, then so is $p$ (hence the choice of letter), but the other direction does not follow. For example, $2^{11} -1= (23)(89)$.

Euler also proved the following statement about odd perfect numbers.

\begin{theorem} \label{Euler's theorem for odd perfects} If $n$ is an odd perfect number, then $n= p^e m^2$ where $p$ is prime, $p \equiv e \equiv 1 \pmod 4$, and $\gcd(p,m)=1$.
\end{theorem}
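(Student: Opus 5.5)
We prove Euler's theorem from multiplicativity of $\sigma$ and a parity count. Write the prime factorization $n=\prod_{i} q_i^{a_i}$ with all $q_i$ odd, so that $\sigma(n)=\prod_i \sigma(q_i^{a_i})=2n$. Since $n$ is odd, $2n\equiv 2 \pmod 4$. Exactly one factor $\sigma(q_i^{a_i})$ is therefore even, and that factor is $\equiv 2 \pmod 4$; every other factor is odd.

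First we determine the parity of each factor. For odd $q$, the sum $\sigma(q^a)=1+q+\cdots+q^a$ has $a+1$ odd terms, so it is odd exactly when $a$ is even. It follows that exactly one prime, call it $p$ with exponent $e$, appears to an odd power, and every other prime appears to an even power. Writing the remaining part as $m^2$ gives $n=p^e m^2$ with $\gcd(p,m)=1$.

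Next we use $\sigma(p^e)\equiv 2\pmod 4$ to get the congruences. Since $e$ is odd, group the terms in pairs:
\[
\sigma(p^e)=(1+p)(1+p^2+p^4+\cdots+p^{e-1}).
\]
The second factor has $(e+1)/2$ terms, each $\equiv 1 \pmod 2$ (and in fact $\equiv 1\pmod 4$, since $p^{2k}\equiv 1\pmod 4$). So the second factor is $\equiv (e+1)/2 \pmod 4$.
\begin{itemize}
\item If $p\equiv 3\pmod 4$, then $4\mid 1+p$, which contradicts $\sigma(p^e)\equiv 2 \pmod 4$. Hence $p\equiv 1\pmod 4$.
\item With $p\equiv 1 \pmod 4$ we have $1+p\equiv 2\pmod 4$. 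The second factor must then be odd, so $(e+1)/2$ is odd, giving $e\equiv 1\pmod 4$.
\end{itemize}

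The main obstacle is only this final mod-$4$ bookkeeping, namely checking that each step forces the congruence rather than merely allowing it. The factorization of $\sigma(p^e)$ above handles this cleanly. Alternatively, one can reduce all $e+1$ terms modulo $4$ directly once $p\equiv 1\pmod 4$ is known: then $\sigma(p^e)\equiv e+1\pmod 4$, so $e+1\equiv 2\pmod 4$.
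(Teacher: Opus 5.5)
Your proof is correct and complete. Multiplicativity together with $v_2(\sigma(n))=1$ forces exactly one prime $p$ to appear to an odd exponent $e$. The factorization $\sigma(p^e)=(1+p)(1+p^2+\cdots+p^{e-1})$ then forces $p\equiv 1\pmod 4$ and $(e+1)/2$ odd, i.e.\ $e\equiv 1\pmod 4$.

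The paper does not prove this theorem; it cites it as Euler's classical result, and your argument is that standard proof. Your argument uses only $\sigma(n)\equiv 2\pmod 4$ for odd $n$. So it also proves the strengthening the paper mentions: the conclusion holds for every odd $n$ with $\sigma(n)\equiv 2n\pmod 4$.
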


Euler's theorem for odd perfect numbers is in fact very weak. It is not hard to modify the proof to show that the result applies just as well to any odd number $n$ where $\sigma(n) \equiv 2n \pmod {4}$. 

Since perfect numbers are rare, they give us little empirical data to work with. In the case of odd perfect numbers, proving statements about them is fraught with difficulty in that one lacks concrete examples to help check that one's claims or proofs are not nonsense or fallacious. A natural inclination is to try to generalize the notion of perfect numbers. In that context,  Sierpi\'nski \cite{WS} defined a number $n$ to be \textit{pseudoperfect} if there is a subset $S$ of the proper divisors of $n$ such that $\sum_{d \in S} d = n$. We refer to the set $S$ as a {\it pseudoperfect representation} of $n$; we will also use this same term to refer to the equation which involves the sum of the divisors. Thus for example, we will say that $\{2, 4, 6\}$ is a pseudoperfect representation of $12$, but can also say that $2+4+6=12$ is a pseudoperfect representation of $12$; in practice this slight abuse of language will not create confusion. 

 The pseudoperfect numbers are sequence \seqnum{A005835} in the OEIS, and the first few are  

$$6, 12, 18, 20, 24, 28, 30, 36, 40 \cdots .$$ 

Given a pseudoperfect representation of $n$, we refer to a number $d$ which is a divisor of $n$ and is not in $S$ as an \textit{omitted divisor}. Note that a number may be pseudoperfect in more than one way. For example, $12$ has both the representation $2+4+6=12$ already mentioned and the representation $1+2+3+6=12$. In the first example the omitted divisors are $1$ and $3$ while in the second example the only omitted divisor is $4$. 
Systematic study of pseudoperfect numbers with a fixed small number of omitted divisors has been a subject of study over the last few years  \cite{AMPSZ, CCEKLM, EH, FFPSZZ, LL, PS, RC, TMF}. Pollack and Shevelev \cite{PS} defined a number to be $k$-near perfect if the number is pseudoperfect with exactly $k$ omitted divisors, and defined a number to be just near perfect if it is $1$-near perfect. 

Pollack and Shevelev found three families of near perfect numbers, \begin{enumerate}
\item $2^{t-1}(2^t-2^k-1)$ where $2^t-2^k-1$ is prime. Here $2^k$ is the omitted divisor.
\item $2^{2p-1}(2^p-1)$ where $2^p-1$ is prime. Here $2^p(2^p-1)$ is the omitted divisor.
\item $2^{p-1}(2^p-1)^2$ where $2^p-1$ is prime. Here $2^p-1$ is the omitted divisor.
\end{enumerate}

Subsequent work by Ren and Chen \cite{RC} showed that if $n$ is a near perfect number with exactly 2 distinct prime divisors, then either $n=40$ or $n$ is one of the three families in question. Li and Liao \cite{LL} gave a partial classification of near perfect numbers with three distinct prime factors.  Tang, Ma, and Fen \cite{TMF} showed that $173369889=3^4 7^2 11^2 19^2 $ is the only odd near-perfect number with four or fewer distinct prime divisors. Whether there are infinitely many odd near perfect numbers is open; it is not hard to show that any odd near perfect must be a perfect square.  Hasanalizade \cite{EH} studied near perfect numbers which are also Fibonacci or Lucas numbers.

2-near perfect numbers were studied by Aryan, Madhavani, Parikh,  Slattery, and the second author of this paper. Those results included a classification of all 2-near perfect numbers of the form $2^m p$ and $2^m p^2$. Subsequent work \cite{FFPSZZ} classified all 2-near perfect numbers with exactly two distinct prime factors. 

Cohen, Cordwell, Epstein, Kwan, Lott and Miller \cite{CCEKLM} gave strong upper and lower bounds on the number of $k$-near perfect numbers for any $k \geq 4$. 

While perfect numbers are rare, pseudoperfect numbers are very common. It is not hard to show that if $n$ is pseudperfect, then so is $mn$. Benkoski and Erd\H{o}s \cite{BE} conjectured that every odd abundant number is pseudoperfect. It is known, due to unpublished work by  Daniel Larsen, that there is a constant $C$ such that if $\sigma(n) >Cn$, then $n$ is pseudoperfect.  Thus, pseudoperfect numbers have almost the opposite problem of perfect numbers: they are so common that there is not much one can say about all of them as a set.  

Strongly pseudoperfect numbers fit in a middle ground: they are common but not as common as pseudoperfect numbers. A positive integer $n$ is said to be strongly pseudoperfect if there is a subset $S$ of the positive divisors of $n$ such that $d \in S$ if and only if $n/d \in S$, and $\sum_{d \in S} d = 2n$. We can talk about a strongly pseudoperfect representation of $n$ in a way analogous to that for pseudoperfect numbers, and similarly talk about omitted divisors given that representation. 

The strongly pseudoperfect numbers form sequence \seqnum{A334405} with the definition first suggested by Amiram Eldar, and with the name ``strongly pseudoperfect'' introduced by the second author and McCormack \cite{MZ}.  The first few strongly pseudoperfect numbers are

$$6, 28, 36, 60, 84, 90, 120, 156, 210, 216, 240 \cdots $$

Note that if $n$ is a strongly pseudoperfect number with strongly pseudoperfect representation $S$, and $n \in S$, then $n$ is pseudoperfect with pseudoperfect representation $S - \{n\}$. However, if $n$ has strongly pseudoperfect representation $S$ and $n \not \in S$, it is not obvious that $n$ is pseudoperfect, since it may be the case the sum of the elements of $S$ is $2n$ but no subset of the proper divisors of $n$ sums to $n$. However,  we are not aware of an example of such a phenomenon, and we suspect it does not happen. Motivated by this,  the second author and McCormack \cite{MZ} defined a number $n$ to be \textit{extremely strongly pseudoperfect} if $n$ is strongly pseudoperfect with representation $S$ and where $n \in S$. 

Just as a pseudoperfect number may have more than one pseudoperfect representation, strongly pseudoperfect numbers also can have more than one strongly pseudoperfect representation. For example, $156$ has a strongly pseudoperfect representation where $(2,78)$ is omitted but it also has a strongly pseudoperfect representation where the pairs $(3,52)$ and $(12,13)$ are omitted. 

Extremely strongly pseudoperfect numbers are a superset of Descartes ``spoof'' perfect numbers.

Descartes noted that $D=198585576189$ looks almost like an odd perfect number. In  particular, one may factor it as $D = 3^2 7^2 11^2 13^2 22021$. One has then that 
$$\sigma(D) = (3^2+3+1)(7^2+7+1)(11^2+11+1)(13^2 +13+1)(22021+1)= 2D$$
where we ignore that $22021 = 19^2 61$  is in fact not a prime number.  

Voight noted the similar example, $V=3^4 7^2 11^2 19^2 (-127)^1$ where $$(3^4 +3^3+3^2 +3+1)(7^2 +7+1)(11^2+11+1)(19^2 +19+1)(-127+1)=2V$$ and where the calculation is treating $-127$ as if it were a positive prime number. 

A large collaboration led by Pace Nielsen \cite{pacespoofgroup} generalized Descartes and Voight's observations as follows. Given an integer $n$, define a {\emph{factorization}} of $n$ to be an expression of the form  $$n = \prod_{i=1}^{k} x_i^{b_i},$$ where the $x_i$ are integers and the $b_i$ are positive integers. Notice that a factorization can also be thought of as a multiset of ordered pairs of the form $(x_i,b_i)$.  

Define a function $\tilde{\sigma}$ on the multiset of such ordered pairs by
$$
\tilde{\sigma}\Big(\{(x_i,b_i):1\leq i\leq k\}\Big) =\prod_{i=1}^{k}\left(\sum_{j=0}^{b_i}x_i^{j} \right).
$$

A factorization as above is \emph{spoof perfect} if $\tilde{\sigma}(\prod_{i=1}^{k}x_i^{b_i})=2\prod_{i=1}^{k}x_i^{b_i}$. Here the sum on the left is defined in terms of the formal product rather than the value of the product, or if one prefers, is defined on the multiset of $(x_i, b_i)$ pairs. If $n=p_1^{a_1}p_2^{a_2}\cdots p_k^{a_k}$ with the $p_i$ distinct primes, then $$\tilde{\sigma}((p_1,a_1),(p_2,a_2) \cdots (p_k,a_k))= \sigma(n).$$ Thus $\tilde{\sigma}$ provides a generalization of the classical $\sigma$ function.  Any actual perfect number $n$ gives rise to a spoof perfect factorization. 

In this context, Descartes's number $D$ gives rise to a spoof factorization given by the ordered pairs $\{(3,2),(7,2),(11,2), (13,2),(22021,1)\}$. The number $D$ with this factorization is the only known example where all the $x_i$ are positive and odd. Many other examples exist where one is allowed to have negative values for $x_i$. If all the $x_i$ are greater than 1, and pairwise relatively prime, then the resulting expanded sum gives rise to an extremely strongly pseudoperfect representation. 

The second author of this paper along with McCormack \cite{MZ} proved the following two results.

\begin{proposition} If $n$ satisfies $n \equiv 2 \pmod 3$ or $n \equiv 3 \pmod 4$, then $n$ is not strongly pseudoperfect.  \label{2 mod 3 and 3 mod 4}
\end{proposition}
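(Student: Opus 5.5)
The plan is to write the sum over a strongly pseudoperfect representation $S$ as a sum over divisor pairs. Since $d\in S$ exactly when $n/d\in S$, the set $S$ is a union of pairs $\{d, n/d\}$; when $n$ is a perfect square there is also the possible singleton $\{\sqrt n\}$. I handle the two congruence conditions separately. In each case the aim is to show that every pair sum $d+n/d$ is divisible by the modulus in question (3 or 4), while $2n$ is not.

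\emph{Case $n\equiv 2 \pmod 3$.} First, $n$ is not a square, because squares are $0$ or $1 \pmod 3$. So $S$ consists only of genuine pairs $\{d,n/d\}$ with $d\neq n/d$. Next, $3\nmid n$, so each divisor $d$ is a unit mod $3$. From $d\cdot(n/d)=n\equiv 2 \pmod 3$, one of $d$ and $n/d$ is $\equiv 1$ and the other is $\equiv 2 \pmod 3$. Hence $d+n/d\equiv 0 \pmod 3$ for every pair, and therefore $\sum_{d\in S} d\equiv 0 \pmod 3$. On the other hand, $2n\equiv 4\equiv 1 \pmod 3$, a contradiction.

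\emph{Case $n\equiv 3 \pmod 4$.} Again $n$ is not a square, since squares are $0$ or $1 \pmod 4$. Also $n$ is odd, so each divisor is $\equiv 1$ or $3 \pmod 4$. From $d\cdot(n/d)\equiv 3 \pmod 4$, one factor is $\equiv 1$ and the other $\equiv 3$, so $d+n/d\equiv 0 \pmod 4$. Thus $\sum_{d\in S}d\equiv 0\pmod 4$. But $n$ odd gives $2n\equiv 2 \pmod 4$, a contradiction.

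There is no real obstacle here. The only point needing care is ruling out the unpaired middle divisor $\sqrt n$ via the quadratic residue observation. Once that is done, the pairing argument goes through directly, using the fact that for these moduli the only units are $\pm1$ and they multiply as in $\{\pm1\}$.
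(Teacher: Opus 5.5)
Your proof is correct and is essentially the paper's argument. The paper gives no separate proof here (the result is cited from \cite{MZ}) and instead obtains it as the $m=3$ and $m=4$ cases of Proposition~\ref{1 and -1 mod m result}, where each pair satisfies $d+n/d\equiv 0 \pmod m$ while $2n\equiv -2 \pmod m$. Your direct use of the fact that the units mod $3$ and mod $4$ are just $\pm1$ stands in for the paper's prime-factor bookkeeping. Your separate non-square check is harmless but redundant, since $d\not\equiv n/d$ already forces $d\neq n/d$.
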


\begin{proposition} If $p$ is a prime where $p > \sigma(n)$,then $pn$ is not strongly pseudoperfect. 
\end{proposition}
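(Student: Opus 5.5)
The plan is to exploit the fact that every divisor of $pn$ is either a divisor of $n$ or $p$ times a divisor of $n$, and that the pairing $d \leftrightarrow pn/d$ always matches one of each kind. Reducing the defining equation modulo $p$ then forces the sum of the "small" divisors in $S$ to be a multiple of $p$, which the hypothesis $p > \sigma(n)$ rules out.

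First, since $p > \sigma(n) \geq n$, the prime $p$ does not divide $n$. Hence the positive divisors of $pn$ are exactly the numbers $d$ and $pd$ with $d \mid n$, and these $2\tau(n)$ numbers are distinct.

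Next, look at the involution $d \mapsto pn/d$ on divisors of $pn$. If $d \mid n$, then $pn/d = p\cdot(n/d)$ is of the second kind. So each orbit is a pair $\{d,\ p(n/d)\}$ with $d \mid n$, and there are no fixed points. A set $S$ closed under the involution is therefore determined by a subset $T$ of the divisors of $n$, via
$$S = \{d : d \in T\} \cup \{p(n/d) : d \in T\}.$$
Its element sum is $A + pB$, where
$$A = \sum_{d \in T} d, \qquad B = \sum_{d \in T} n/d.$$

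Now suppose $S$ were a strongly pseudoperfect representation, so that $A + pB = 2pn$. Reducing modulo $p$ gives $p \mid A$. But $0 \le A \le \sigma(n) < p$, so $A = 0$, which means $T = \emptyset$. Then $S$ is empty and its sum is $0 \neq 2pn$, a contradiction.

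The only point needing care is the pairing structure: the claim that $d$ and $pn/d$ always lie in different "layers", which uses $p \nmid n$. Everything else is immediate, so I do not anticipate a real obstacle.
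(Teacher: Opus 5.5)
Your proof is correct and complete. Since $p\nmid n$, every complementary pair of divisors of $pn$ has the form $\{d,\,p(n/d)\}$ with $d\mid n$. Reducing $A+pB=2pn$ modulo $p$ therefore forces $p\mid A$. If $S\neq\emptyset$ then $0<A\le\sigma(n)<p$, which is impossible; if $S=\emptyset$ the sum is $0\neq 2pn$.

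The paper itself gives no proof of this proposition. It quotes the result in the introduction from the earlier work of McCormack and the second author, so there is no in-paper argument to compare yours against. Your reduction modulo $p$ is the natural route. It is the same kind of argument the paper uses for its own congruence obstructions, such as Proposition~\ref{1 and -1 mod m result}, where each divisor pair's contribution is reduced modulo a fixed integer.
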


They also showed that every number $n$ of the form  $n= (2^{k-1})(2^k-1)$ is strongly pseudoperfect; the proof is to essentially ``pretend'' that $2^k-1$ is prime, and ignore all divisors of $n$ that would arise from non-trivial factors of $2^k-1$, and then the proof is identical to that of Euclid.  

One question in that paper was whether there are infinitely many odd strongly pseudoperfect numbers. We will show that the answer to that question is yes.  This will be a corollary of one of our two main results.

\begin{theorem} Let $n$ be a strongly pseudoperfect number. If there exists a strongly pseudoperfect representation of $n$ in which the divisor pair $(1, n)$ is not omitted, then $n^k$ is strongly pseudoperfect for every positive integer k.  \label{Power result}
\end{theorem}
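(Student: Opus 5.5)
The plan is to build a representation of $n^k$ directly from $S$, not by a general product argument. Write $S_0 = S\setminus\{1,n\}$. The hypothesis that $(1,n)$ is not omitted means $1,n\in S$, so $\sum_{d\in S_0} d = n$ and $S_0$ is closed under $d\mapsto n/d$. Also every $d\in S_0$ satisfies $1<d<n$. This strict inequality is the only place the hypothesis enters, and it is what I will use for distinctness: for $d,d'\in S_0$ and $j\neq j'$ we have $n^j d\neq n^{j'}d'$, since $n^j<n^jd<n^{j+1}$, and no $n^jd$ is a power of $n$.

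The basic building blocks are therefore the \emph{levels} $L_j=\{n^jd: d\in S_0\}$, with $\sum_{x\in L_j}x = n^{j+1}$, together with the pure powers $n^i$. The pairing $x\leftrightarrow n^k/x$ sends $L_j$ to $L_{k-1-j}$ and $n^i$ to $n^{k-i}$. So any set of the form
\[
T=\bigcup_{j\in J}L_j\ \cup\ \{n^i: i\in I\},
\]
with $J\subseteq\{0,\dots,k-1\}$ closed under $j\mapsto k-1-j$ and $I\subseteq\{0,\dots,k\}$ closed under $i\mapsto k-i$, is automatically a symmetric set of distinct divisors of $n^k$. The first step is to record this and check the distinctness claim carefully.

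The second step is to choose $J$ and $I$, and more generally to allow ``mixed'' blocks such as $\{n^j de: d,e\in S_0\}$ or $\{n^j d: d\in S\}$, so that the total is exactly $2n^k$. This is where I expect the main obstacle.

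A pure level/power construction cannot work on its own. Matching the coefficient of $n^k$ forces $k-1\in J$ or $k\in I$. Symmetry then forces $0\in J$ or $0\in I$, which contributes an unwanted $n$ or $1$, and with digits bounded by $2$ in base $n$ there is nothing to cancel it.

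So my first attempt at the hard step is to use products of elements of $S_0$. For $k=2$, the block $\{de: d,e\in S_0\}$ is symmetric, since $n^2/(de)=(n/d)(n/e)$, and as a multiset it sums to $n^2$. If I could show it consists of distinct elements, or replace it by a distinct symmetric subfamily, then combining it with a chain of levels would give the needed correction. The general case would then follow by induction on $k$, using a representation $T_{k-1}$ of $n^{k-1}$ containing $1$ and $n^{k-1}$, and passing to $n^k$ by combining $T_{k-1}$ with $S$ in a symmetric way.

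The delicate points I anticipate are two. First, avoiding coincidences among products $n^j d e$; here I would try to replace $S$ by its ``$n$-adic spread'' to separate magnitudes. Second, making sure that the property of containing $1$ and $n^k$ is preserved, so that the induction continues.
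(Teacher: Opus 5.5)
\textbf{There is a genuine gap, caused by an arithmetic slip in your first line.} Since $S$ sums to $2n$ and contains both $1$ and $n$, the set $S_0=S\setminus\{1,n\}$ sums to $2n-1-n=n-1$, not $n$. So each level $L_j$ sums to $n^j(n-1)=n^{j+1}-n^j$, not $n^{j+1}$.

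Your argument that a pure level/power construction cannot work uses the wrong value, and it fails once the sum is corrected. Take $J=\{0,1,\dots,k-1\}$ and $I=\{0,k\}$; both satisfy your symmetry conditions. The total then telescopes:
\[
\sum_{j=0}^{k-1}\bigl(n^{j+1}-n^j\bigr)+1+n^k=(n^k-1)+1+n^k=2n^k .
\]
The ``unwanted'' $1$ coming from $0\in I$ is exactly what cancels the $-1$ left over by the telescoping. Your step-one checks are correct: elements of different levels are distinct because $n^j<n^jd<n^{j+1}$, no element of a level is a power of $n$, and $L_j$ is paired with $L_{k-1-j}$. Together with the telescoping sum, this is a complete proof. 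It is precisely the paper's argument: multiply $S_0$ by $1,n,\dots,n^{k-1}$, observe the sum is $(1+n+\dots+n^{k-1})(n-1)=n^k-1$, and adjoin $1$ and $n^k$. The resulting representation of $n^k$ again contains $1$ and $n^k$, so no separate induction is needed to preserve the hypothesis.

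As written, however, your proposal does not prove the statement. Having discarded the construction that works, you leave the step you call the main obstacle open. The substitute route does not work as described. The family $\{de: d,e\in S_0\}$ has repetitions (already $de=ed$), so it is not a set of distinct divisors, and its multiset sum is not the set sum. The ``$n$-adic spread'' and the induction from $T_{k-1}$ are never specified. The fix is simply to recompute $\sum_{d\in S_0}d=n-1$ and take $L_0\cup\dots\cup L_{k-1}\cup\{1,n^k\}$.
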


We will also prove 

\begin{theorem}\label{Mersenne result} Suppose that $n$ is a strongly pseudoperfect number of the form $n=2^kp$ where $p=2^m-1$ and $p$ is prime. Then $n$ is perfect. \end{theorem}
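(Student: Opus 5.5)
The plan is to write everything in terms of the divisor pairs and reduce strong pseudoperfectness to a binary-digit equation that is impossible unless $k=m-1$. The divisors of $n=2^kp$ are $2^i$ and $2^ip$ for $0\le i\le k$, and the complementary pairs are exactly $(2^i,\,2^{k-i}p)$. So a strongly pseudoperfect representation is determined by a set $U\subseteq\{0,\dots,k\}$ of indices of omitted pairs. Put
$$A=\sum_{i\in U}2^i,\qquad B=\sum_{i\in U}2^{k-i}.$$
Then the condition $\sum_{d\in S}d=2n$ reads $\sigma(n)-2n=A+pB$.

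First compute the excess: $\sigma(n)-2n=(2^{k+1}-1)2^m-2^{k+1}(2^m-1)=2^{k+1}-2^m$.
\begin{itemize}
\item If $k+1<m$, this is negative, so $n$ is deficient and cannot be strongly pseudoperfect.
\item If $k+1=m$, then $n$ is perfect, which is the conclusion.
\end{itemize}
It remains to rule out $k\ge m$. In that case the excess is positive, so $U\neq\emptyset$ and we need
$$A+pB=2^{k+1}-2^m .$$

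The key steps are a size constraint followed by a congruence constraint.

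\emph{Size.} Since $pB\le 2^{k+1}-2^m=2^m(2^{k+1-m}-1)$ and $p=2^m-1<2^m$, we get $B<2^{k+1-m}$. Hence every $i\in U$ has $k-i\le k-m$, that is, $i\ge m$. Consequently $2^m\mid A$.

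\emph{Congruence.} Reducing the equation modulo $2^m$ and using $p\equiv -1$ gives $A\equiv B\pmod{2^m}$. So $2^m\mid B$, which forces $k-i\ge m$, that is, $i\le k-m$, for all $i\in U$.

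So $U\subseteq[m,k-m]$, and in particular $L:=k-2m\ge 0$. Now write $A=2^mA'$ and $B=2^mB'$, where $A'$ and $B'$ are sums of distinct powers $2^j$ with $0\le j\le L$. Dividing the equation by $2^m$ gives
$$A'+pB'=2^{k+1-m}-1=2^{L+m+1}-1 .$$

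The final step is a crude maximum bound. Since $A',B'\le 2^{L+1}-1$,
$$A'+pB'\le(2^{L+1}-1)(p+1)=2^{L+m+1}-2^m<2^{L+m+1}-1,$$
using $m\ge 2$ (as $p$ is prime). This is a contradiction, so $k\ge m$ is impossible and $n$ must be perfect.

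The step I expect to need the most care is the congruence/size interplay. One must correctly extract $i\ge m$ from the size bound before the congruence can be used to get $i\le k-m$. One must also check the edge case $k<2m$, where the interval $[m,k-m]$ is empty and the nonempty $U$ gives an immediate contradiction.
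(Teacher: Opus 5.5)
The gap is in your \emph{Size} step, and everything before your final estimate depends on it. From $pB\le 2^m(2^{k+1-m}-1)$ and $p<2^m$ you cannot deduce $B<2^{k+1-m}$. Dividing by $p<2^m$ gives the \emph{weaker} bound $B\le \frac{2^m}{2^m-1}(2^{k+1-m}-1)$, which permits $B=2^{k+1-m}$ whenever $k+1\ge 2m$.

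Here is a concrete instance. Take $m=2$, $k=4$, so $n=48$ and the excess is $28$. Omitting only the pair $(2,24)$, which has index $i=1<m$, gives $B=8=2^{k+1-m}$ with $pB=24\le 28$. Even the whole pair sum $26$ is below $28$, so no magnitude argument applied to that index can exclude it. Size does rule out $i\le m-2$, since then $pB\ge 2^{k+1}$. It does not rule out $i=m-1$ once $k\ge 2m$. So your claims that $i\ge m$ and $2^m\mid A$ are unproved, and your deduction $2^m\mid B$, which used $2^m\mid A$, falls with them.

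The missing idea is to apply the congruence \emph{first}. This is what the paper's minimal-$2$-adic-valuation argument in its Case 2A accomplishes. The relation $A\equiv B\pmod{2^m}$ needs only $k\ge m$, with no size information. Now $A\bmod 2^m=\sum_{i\in U,\,i<m}2^i$ and $B\bmod 2^m=\sum_{i\in U,\,k-i<m}2^{k-i}$ are both sums of distinct powers of $2$ below $2^m$. Uniqueness of binary expansion then gives, for every $j<m$: $j\in U$ if and only if $k-j\in U$.

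So a low index forces its mirror index. Together the two pairs contribute $(p+1)(2^j+2^{k-j})=2^m(2^j+2^{k-j})$. Since $k-j\ge k-m+1$, this exceeds $2^{k+1}-2^m$. When $j=k-j$, the single pair contributes $2^{j+m}\ge 2^{k+1}$, which is also too large. Hence $U\cap[0,m-1]=\emptyset$, so $2^m\mid A$, then $2^m\mid B$, i.e. $U\subseteq[m,k-m]$. Your final estimate then completes the proof; it is exactly the paper's Case 2B bound $2^{k+1}-2^{2m}<2^{k+1}-2^m$.
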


Theorem \ref{Mersenne result} can be considered a generalization in some sense of the Euler's direction of the Euclid-Euler theorem, since it is saying that the only strongly pseudoperfect numbers which arise of this form are precisely those arising from Euclid's construction.  The first section after this introduction discusses new constructions for strongly pseudoperfect numbers, including the proof of Theorem \ref{Power result}. The second section discusses new restrictions that strongly pseudoperfect numbers must satisfy. The third section discusses open questions. The final section includes tables of data for various types of strongly pseudoperfect numbers. 

\section{New constructions}

In this section, we present new constructions of strongly pseudoperfect numbers. We first prove Theorem \ref{Power result}.

\begin{proof}[Proof of Theorem \ref{Power result}]

We prove the theorem with the assumption that $n$ is not a perfect square. The proof is essentially identical when $n$ is a perfect square.

Let $S$ be a subset of the divisors of $n$ that does not contain the divisor pair $(1, n)$, and suppose that $d_i\in S$ if and only if $\frac{n}{d_i}\in S$. Let $T$ be the sum of $S$, and assume that

\[T=\sum_{i=1}^m(d_i+q_i)=n-1,\]
Note that $n$ is then a strongly pseudoperfect number with representation $S \cup \{1,n\}$. Let $S_k$ be a subset of the divisors of $n^k$, obtained by multiplying the elements in $S$ by $(1, n, \dots ,n^{k-1})$. Since all elements $d_i, q_i\in S$ are divisors of $n$, every element $n^jd_i, n^jq_i\in S_k$ divides $n^jn=n^{j+1}$, and therefore divides $n^k$ where $0\leq j\leq k-1$. Note that $n^jd_i$ are distinct since $n^j < n^jd_i < n^{j+1}$ since $1< d_i < n$.

Note that $S_k$ obeys the property that when a divisor $n^jd_i\in S_k$, its pair $n^{k-j-1}q_i$ is also in $S_k$ since $n^jd_i\times n^{k-j-1}q_i=n^k$. Since the pair $(1, n)$ is not excluded from $S$, multiplying the elements in $S$ by $(n^0,n,\dots,n^{k-1})$ can never produce $1=1\times p^0$ or $n^k=n\times n^{k-1}$, so $1\notin S_k$ and $n^k\notin S_k$. 
Let $T_k$ be the sum of $S_k$ where

\[T_k=(1+n+\dots+n^{k-1})\times T\]

Substituting $T=n-1$ we have,

\[T_k=\frac{1-n^k}{1-n}\times(n-1)=n^k-1\]
Since $1\notin S_k$ and $n^k\notin S_k$, we can add the divisor pair $1+n^k$ to $T_k$ to get $n^k-1+1+n^k=2n^k$, and thus $n^k$ is strongly pseudoperfect. Thus, we now have a strongly pseudoperfect representation for $n^k$ which contains $(1,n^k)$, and the hypothesis holds allowing us to iterate the construction. 
\end{proof}

We then have a corollary of Theorem \ref{Power result}.

\begin{corollary} There are infinitely many odd strongly pseudoperfect numbers. \label{Infinitely many odd strongly pseudoperfects}
\end{corollary}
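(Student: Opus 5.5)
The plan is to apply Theorem \ref{Power result} to a single odd strongly pseudoperfect number $n$ that has a representation containing the pair $(1,n)$. Every power $n^k$ is then odd and strongly pseudoperfect, and since $n>1$ the numbers $n,n^2,n^3,\dots$ are pairwise distinct. So the whole argument reduces to exhibiting one such $n$.

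For the seed I will use Descartes's number
$$D=3^2\cdot 7^2\cdot 11^2\cdot 13^2\cdot 22021,$$
where $22021=19^2\cdot 61$. The bases $3,7,11,13,22021$ are all greater than $1$ and pairwise coprime, because $19$ and $61$ do not appear among $3,7,11,13$. Expanding the spoof product
$$\tilde{\sigma}=(1+3+3^2)(1+7+7^2)(1+11+11^2)(1+13+13^2)(1+22021)$$
gives a sum of the $3^4\cdot 2=162$ terms $3^a7^b11^c13^d22021^e$ with $0\le a,b,c,d\le 2$ and $e\in\{0,1\}$. Each term is a divisor of $D$.

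The key steps are as follows.

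\textbf{Distinctness.} The terms are pairwise distinct. Rewriting each term as $3^a7^b11^c13^d19^{2e}61^e$ and using unique factorization, the exponent vector $(a,b,c,d,e)$ is recovered from the value. So $S$, the set of these terms, really is a set of divisors.

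\textbf{Closure under complements.} $S$ is closed under $d\mapsto D/d$. This map sends $(a,b,c,d,e)$ to $(2-a,2-b,2-c,2-d,1-e)$, which stays in the allowed range.

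\textbf{The sum.} The sum of $S$ is $\tilde{\sigma}=2D$. This is Descartes's identity and is a direct arithmetic check: the five factors are $13\cdot 57\cdot 133\cdot 183\cdot 22022$, and one verifies that this equals $2D$.

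\textbf{Containing $(1,D)$.} The terms with all exponents $0$ and with all exponents maximal give $1\in S$ and $D\in S$, so the pair $(1,D)$ is not omitted.

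Theorem \ref{Power result} then shows that every $D^k$ is strongly pseudoperfect, and these are infinitely many odd numbers.

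The only step needing care is distinctness, since $22021$ is not prime. The resolution is simply that its prime factors $19$ and $61$ are disjoint from $\{3,7,11,13\}$ and it appears only to the first power, which is exactly the general remark in the introduction about pairwise coprime $x_i>1$. If one wanted a smaller seed, one could instead search for a small odd strongly pseudoperfect number, for example among the odd entries of \seqnum{A334405}, and check directly that it has a representation containing $1$. The Descartes number, however, avoids any computer search.
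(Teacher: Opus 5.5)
Your proof is correct and follows the paper's strategy exactly: apply Theorem \ref{Power result} to a single odd seed that has a strongly pseudoperfect representation containing the pair $(1,n)$, then take powers. The only difference is the seed. The paper uses $n=11025=3^2 5^2 7^2$ and checks directly that $\sigma(n)-2n=921=105+(25+441)+(35+315)$. You instead use Descartes's number, whose spoof factorization into pairwise coprime bases greater than $1$ yields an extremely strongly pseudoperfect representation with no search, and your distinctness, complement-closure and sum checks (including $13\cdot 57\cdot 133\cdot 183\cdot 22022=2D$) are all right.
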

\begin{proof} This follows from observing that $n=11025$ is an odd strongly pseudoperfect number which satisfies the hypotheses of Theorem $\ref{Power result}$. In particular, if $n=11025$, then $\sigma(n)= 22971$, and so we obtain a strongly pseudoperfect representation by omitting $105$ (which pairs with itself), and omitting the pairs $(25, 441)$ and $(35, 315)$. \end{proof}

The following result arose from a conversation with Anthropic's Claude AI:

\begin{proposition}  If $n = 3(2^a) (2^{a+1}-3)$ where $a\geq 2$, and $2^{a+1}-3$ is prime, then $n$ is strongly pseudoperfect with exactly four omitted divisors: $2$, $n/2$, $6$, and $n/6$. \label{Claude's family}    
\end{proposition}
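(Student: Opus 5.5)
The plan is a direct computation: sum all divisors of $n$, then check that the surplus over $2n$ is exactly the sum of the four omitted divisors. Write $q = 2^{a+1}-3$, so that $n = 3\cdot 2^a q$. For $a\ge 2$ we have $q\ge 5$, so $q$ is a prime distinct from $2$ and $3$, and by multiplicativity
$$\sigma(n) = \sigma(3)\,\sigma(2^a)\,\sigma(q) = 4(2^{a+1}-1)(2^{a+1}-2).$$
Setting $x = 2^{a+1}$, this reads $\sigma(n) = 4(x-1)(x-2)$, while $n = \tfrac{3}{2}x(x-3)$ and hence $2n = 3x(x-3)$. Expanding gives
$$\sigma(n) - 2n = 4x^2 - 12x + 8 - 3x^2 + 9x = x^2 - 3x + 8.$$

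Next compute the sum of the proposed omitted divisors, $2 + n/2 + 6 + n/6 = 8 + \tfrac{2n}{3}$. Since $n = \tfrac32 x(x-3)$, we get $\tfrac{2n}{3} = x(x-3) = x^2 - 3x$, so the omitted divisors sum to $x^2 - 3x + 8$. This agrees exactly with $\sigma(n) - 2n$. Therefore, if $S$ is the set of all divisors of $n$ other than these four, then $\sum_{d\in S} d = 2n$.

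It remains to check that $S$ is closed under $d \mapsto n/d$ and that the four omitted numbers really are four distinct divisors. Closure is immediate, because the omitted set is the union of the complementary pairs $(2, n/2)$ and $(6, n/6)$, and the complement of a union of complementary pairs is again closed under the pairing. For the divisibility and distinctness: $2$ and $6$ divide $n$ because $a\ge 1$ and $3\mid n$. We have $2 \ne 6$, and $n/6 > 6$ because $n \ge 3\cdot 4\cdot 5 = 60$; hence the four values $2 < 6 < n/6 < n/2$ are distinct. The only subtlety in the whole argument is this bookkeeping of distinctness, and in particular the requirement $a \ge 2$, which is exactly what makes $q \ge 5$ prime and coprime to $6$ so that $\sigma$ factors as above.

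Finally, "exactly four omitted divisors" holds by construction, since we omit precisely these four and include every other divisor.
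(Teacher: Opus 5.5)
Your proposal is correct and takes the same approach as the paper, which only remarks that checking the claim is straightforward. Your explicit computation of $\sigma(n)-2n=x^2-3x+8$ with $x=2^{a+1}$, matched against $2+6+\tfrac{n}{2}+\tfrac{n}{6}=8+\tfrac{2n}{3}$, together with the check that $2<6<n/6<n/2$ are distinct paired divisors, is precisely that direct verification written out.
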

Checking the claim of Proposition \ref{Claude's family} is straightforward. After Claude pointed out this family of solutions, we generalized the result further. 

A family like the new family above uses essentially a set of small omitted divisors $d_1, d_2, \cdots d_k$ such that $\sigma(n)$ is very close to $2n + n(1/d_1 + 1/d_2 + \cdots 1/d_k)$ With $n= m(2^a)$ for some fixed $m$, when $a$ is large $h(n)$ is very close to some specific rational which can be written this way. For example, the above family is using that $h(n)$ is very close to $8/3$ and $8/3 = 2 + 1/2 + 1/6$. More generally if $b$ is a positive integer and we set $q= 2^{b+1}-1$, then we note that $$2\frac{q+1}{q} = 2 + \frac{1}{2^b} + \frac{1}{(2^{b+1}-1) (2^b)},$$ which motivates the next result.

\begin{theorem} Let $b$ be a fixed integer, and set $q= 2^{b+1}-1$. Let $p = 2^{a+1} - (2^{b}+1)$ with $a > b \geq 1$. Then $n=pq 2^a$ is a strongly pseudoperfect. If $p$ and $q$ are prime, then $n$ is strongly pseudoperfect with exactly four omitted divisors. \label{Generalized family of strongly pseudoperfect} 
\end{theorem}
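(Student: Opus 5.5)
The approach is to write down an explicit representation, modelled on Proposition \ref{Claude's family} (the case $b=1$, $q=3$): omit the two divisor pairs
$$\left(2^b,\ \frac{n}{2^b}\right)=\left(2^b,\ 2^{a-b}pq\right), \qquad \left(2^b q,\ \frac{n}{2^b q}\right)=\left(2^b q,\ 2^{a-b}p\right).$$
I then check that the remaining ``formal'' divisors sum to $2n$. As in the remark after Theorem \ref{Euclid-Euler} about $2^{k-1}(2^k-1)$, I will not assume $p$ or $q$ is prime. Instead I work with the formal divisor set
$$D=\{2^i p^e q^f : 0\le i\le a,\ e,f\in\{0,1\}\},$$
pretending $p$ and $q$ are prime and ignoring any further divisors they may have.

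The first step is to check that $D$ is a legitimate set of $4(a+1)$ distinct divisors of $n$, closed under $d\mapsto n/d$. Closure is clear, since $n/(2^ip^eq^f)=2^{a-i}p^{1-e}q^{1-f}$. Both $p$ and $q$ are odd and greater than $1$, because $a>b\ge 1$ gives $p=2^{a+1}-2^b-1\ge 2^{b+1}-1>1$. So distinctness reduces, by comparing $2$-adic valuations and odd parts, to showing $p\neq q$. If $p=q$ then $2^{a+1}-2^b=2^{b+1}$, i.e.\ $2^b(2^{a+1-b}-1)=2^{b+1}$, which is impossible. The same observations show the four omitted elements are distinct and are not fixed points of $d\mapsto n/d$: the elements $2^b$ and $2^bq$ have odd parts $1$ and $q$, while their partners have odd parts $pq$ and $p$.

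The second step is the arithmetic. The sum of $D$ is
$$(2^{a+1}-1)(q+1)(p+1)=2^{b+1}(2^{a+1}-1)(2^{a+1}-2^b).$$
The omitted divisors sum to
$$2^b(1+q)+2^{a-b}p(q+1)=2^{2b+1}+2^{a+1}p.$$
So I need
$$2^{b+1}(2^{a+1}-1)(2^{a+1}-2^b)=2^{a+1}pq+2^{a+1}p+2^{2b+1}=2^{a+b+2}p+2^{2b+1}.$$
Substituting $p=2^{a+1}-2^b-1$, both sides expand to
$$2^{b+1}\left(2^{2a+2}-2^{a+b+1}-2^{a+1}+2^b\right),$$
which is a routine expansion. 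Hence $D$ minus the two omitted pairs is a strongly pseudoperfect representation of $n$.

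Finally, when $p$ and $q$ are prime (they are then distinct odd primes), $D$ is exactly the set of all divisors of $n$. The representation therefore omits exactly the four divisors $2^b$, $n/2^b$, $2^bq$ and $n/(2^bq)$. The only real obstacle is the bookkeeping in the first step: making sure the formal divisors are genuinely distinct integers, so that the construction does not implicitly rely on primality. The identity in the second step is mechanical once the omitted pairs are guessed from $2\frac{q+1}{q}=2+\frac{1}{2^b}+\frac{1}{q2^b}$.
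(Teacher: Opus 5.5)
Your proposal is correct and follows the paper's proof. You omit the same two pairs $(2^b, n/2^b)$ and $(2^bq, n/(2^bq))$ and use the same device of pretending $p$ and $q$ are prime when they are composite, while also writing out the arithmetic identity and the distinctness check ($p,q>1$ and $p\neq q$, so the formal divisors are genuinely distinct and closed under $d\mapsto n/d$) that the paper leaves as a ``straightforward calculation.''
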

\begin{proof} It is a straightforward calculation that if $p$ is prime, that $n$ is strongly pseudoperfect with omitted pairs of divisors 
 $(2^b,  (qp2^a)/(2^b) )$ and $((2^{b+1}-1)(2^b) , qp2^a/((2^{b+1}-1)2^b)$. If $p$ or $q$ is not prime, then we can remove all pairs of divisors that arise from non-trivial factors of $p$ and $q$, so $n$ is still strongly pseudoperfect, but with more omitted divisors, with all the divisors creating new distinct pairs, essentially via ``pretending'' that $p$ and $q$ are prime in the same way as the generalization of the Euclid-Euler construction.
\end{proof}

\section{New restrictions}
\begin{proposition} Assume that $n=3(2^a)p$ where $p$ is a prime at least 5. Assume further that $n$ is strongly pseudoperfect with exactly one pair of divisors omitted. Then $n =156 $ and the omitted divisors are $(2,78)$. \label{3 2 to a times p, exactly one pair of divisors omitted}

\end{proposition}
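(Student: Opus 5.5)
Since $p$ occurs to the first power, $n$ is not a perfect square, so the omitted pair is $(d,n/d)$ with $d\neq n/d$. Exactly one of the two contains the factor $p$. The plan is to turn the omission condition into one equation and then compare the coefficients of $p$ on both sides, using that the other member of the pair is a divisor of $3\cdot 2^a$.

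First I compute the excess. We have
$$\sigma(n)=4(2^{a+1}-1)(p+1) \quad\text{and}\quad 2n=6\cdot 2^a p.$$
Omitting exactly one pair means $\sigma(n)-2n=d+n/d$. Expanding gives
$$d+\frac nd \;=\; E \;:=\; (2^{a+1}-4)\,p + (2^{a+3}-4).$$
I relabel the pair as $\{cp,\,u\}$, where $u$ and $c=3\cdot 2^a/u$ are complementary divisors of $3\cdot 2^a$. No ordering between $u$ and $cp$ is needed. The equation becomes
$$u=(2^{a+1}-4-c)\,p+2^{a+3}-4.$$

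The key step is to compare sizes. Suppose $c\le 2^{a+1}-5$. Then $u\ge p+2^{a+3}-4>3\cdot 2^a$, which is impossible since $u\mid 3\cdot 2^a$. Hence $c\ge 2^{a+1}-4$.

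The divisors of $3\cdot 2^a$ that are at least $2^{a+1}-4$ are few:
$$c\in\{3\cdot 2^a,\ 2^{a+1}\ (\text{not a divisor unless }\ldots\text{, in fact never}),\ 2^a,\ 3\cdot 2^{a-1},\ 2^{a-1},\ 3\cdot 2^{a-2},\ \ldots\}.$$
Any $c$ smaller than $3\cdot 2^{a-1}$ satisfies $c\ge 2^{a+1}-4$ only for tiny $a$. So for $a\ge 2$ only a bounded list survives, namely $c\in\{3\cdot 2^a,\,3\cdot 2^{a-1},\,2^a\}$, plus finitely many small-$a$ exceptions.

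For each surviving $c$, the cofactor $u=3\cdot2^a/c$ is fixed: $u=1$, $2$, or $3$ respectively. The equation then becomes
$$(c-2^{a+1}+4)\,p = 2^{a+3}-4-u,$$
which is linear in $p$. I solve it for $p$.

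For $c=3\cdot 2^a$ this reads $p(2^a+4)=2^{a+3}-5$, so $p<8$. Checking $p=5,7$ shows neither works. For $c=3\cdot 2^{a-1}$ we get $p(4-2^{a-1})=2^{a+3}-6$, which forces $a\le 2$. With $a=2$ this gives $2p=26$, so $p=13$ and $n=156$, with omitted pair $(2,78)$. For $c=2^a$ we get $p(4-2^a)=2^{a+3}-7$, which requires $a=1$; that case needs separate treatment, or is excluded because then $p$ is not an integer.

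The main obstacle is the bookkeeping for small $a$ (roughly $a\le 4$), where more divisors $c$ of $3\cdot2^a$ exceed $2^{a+1}-4$. There I would simply enumerate all $c$ and check each linear equation directly for a prime solution $p\ge 5$. I expect only $(a,p)=(2,13)$ to survive.
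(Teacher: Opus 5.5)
Your proof is correct and follows essentially the paper's route. The paper also reduces the problem to a single linear equation in $p$, splitting into the cases where the $p$-free divisor is $2^b$ or $3\cdot 2^b$, and uses the sign of the coefficient of $p$ to leave only a few explicit cases; that sign condition is exactly your condition $c\ge 2^{a+1}-4$. The small-$a$ bookkeeping you deferred is trivial: for $a\ge 2$ your three values of $c$ are already exhaustive, and for $a=1$ the only cases you have not written out are $c=3$ (giving $3p=10$) and $c=1$ (giving $p=6$), neither of which yields a prime.
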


\begin{proof}  Assume that $n=3(2^a)p$ where $p$ is a prime at least 5, and $a \geq 1$. Assume further that $n$ is strongly pseudoperfect with exactly one pair of divisors omitted. We have $\sigma(n)=4(2^{a+1}-1)(p+1)$. Assume the omitted pair of divisors is $x$ and $y$. So $xy= 3(2^a)p $, and without loss of generality we may assume that $p|y$ and $p \nmid x$.  We have then

\begin{equation} 4(2^{a+1}-1)(p+1) - x -y = 2n=3(2^{a+1})p. \label{equation which knocks out a single pair of divisors} \end{equation}

We have two cases to consider: First, $x=2^b$ for some $b \leq a$. Second, $x=3(2^b)$ for some $b \leq a$.

Case I: We have $x=2^b$ and so $y= 3(2^{a-b})p$. Then from Equation \ref{equation which knocks out a single pair of divisors} we have

\begin{equation} 4(2^{a+1}-1)(p+1) - 2^b - 3(2^{a-b})p = 3(2^{a+1})p. \label{Case I x, y} \end{equation}
Set  $2^a=A$, $2^b=B$, and solve for $p$ to obtain:

\begin{equation} p= \frac{B(B+4 - 8A)}{A(2B-3)-4B}. \label{Case I, solved for p} \end{equation}

 The numerator of Equation \ref{Case I, solved for p} is negative, and so the denominator must also be negative. Thus, we need that $A(2B-3)-4B<0$
and so $4B> A(2B-3)$. But since $A \geq B$, and $A$ and $B$ both are powers of $2$, this can only happen if $B=1$ or $B=2$.

Assume that $B=1$. Thus, from Equation \ref{Case I, solved for p} we obtain that $p= \frac{5-8A}{-A-4} = (8A-5)/(A+4)$. But then the numerator  is odd (since $A$ is a power of 2 greater than 1) and denominator is even, so there are no integer solutions.

Now, assume that $B=2$. We then have from Equation \ref{Case I, solved for p} that  $p= \frac{2(6-8A)}{A-8}= \frac{2(8A-6)}{8-A}$. So $A < 8$ since $4B-A>0$ since $4B > A(2B-3)$. Since $B \leq A< 8$ and both $A$ and $B$ are powers of 2, either $A=2$ or $A=4$. If $A=2$, $p$ is not an integer. If $A=4$, then $p= 13$, and $n=156$, and $x=2$ and $y=78$. 

Case II: Assume that $x= 3(2^b)$. 

We get the equation:
\begin{equation} 4(2^{a+1}-1)(p+1) - 3(2^b) - (2^{a-b})p = 3(2^{a+1})p. \label{Case II x, y} \end{equation}

We again set $2^a=A$, $2^b=B$, and solve for $p$ to obtain:
$$p = \frac{B(3B+4 - 8A)}{A(2B-1)-4B}.$$

But $3B+4 - 8A$ is negative, and so $A(2B-1)-4B$ must also be negative in order for $p$ to be positive. Thus, $A(2B-1) <4B $. If $B=1$, this becomes $A < 4$, and so the only options then are $A=1$ and $A=2$, neither of which gives an integer value for $p$.  If $B$ =2, then $3A < 8$, and so the only option is $A=B=2$,  which yields $p=6$ which is not prime.  If $B \geq 4$, then $A(2B-1) \geq 7A > 4B$, and so we have a contradiction.
\end{proof}

\begin{lemma}  Let $n$ be a strongly pseudoperfect number of the form $n= (3)2^a p$ for some prime $p >3$. Assume further that $a \geq 5$ and $n$ has exactly four omitted divisors. Then one of the omitted divisors is $1$ or $2$. \label{Four omitted divisors, one is 1 or 2} Moreover, if the smallest omitted divisor is $2$, then the second smallest omitted divisor is one of $3$, $4$, $5$, $6$, $7$, or $8$.
\end{lemma}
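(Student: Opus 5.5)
The plan is to compare the total of the omitted divisors with the gap $\sigma(n)-2n$, and to show that if the omitted pairs avoid the very small divisors then their sum is too small.

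\textbf{Computing the gap.} We have $\sigma(n)=4(2^{a+1}-1)(p+1)$ and $2n=3\cdot 2^{a+1}p$. Expanding gives
$$\sigma(n)-2n = 2^{a+1}p + 2^{a+3} - 4p - 4 = \frac{2n}{3} + 2^{a+3} - 4p - 4.$$
Since $p$ appears to the first power, $n$ is not a perfect square, so no divisor is paired with itself. Four omitted divisors therefore means exactly two omitted pairs $(x_1, n/x_1)$ and $(x_2, n/x_2)$ with $x_1<x_2<\sqrt n$. The defining condition becomes
$$x_1+\frac{n}{x_1}+x_2+\frac{n}{x_2} = \frac{2n}{3}+2^{a+3}-4p-4.$$

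\textbf{First claim.} The key tool is that $f(x)=x+n/x$ is decreasing on $[1,\sqrt n]$. Suppose, for contradiction, that neither $1$ nor $2$ is omitted. Then $x_1\ge 3$ and $x_2\ge 4$, since they are distinct integers. Monotonicity bounds the left side by $3+n/3+4+n/4 = 7n/12+7$. So it suffices to show
$$\frac{n}{12}+2^{a+3}-4p-11>0.$$
Here $n/12 = 2^{a-2}p$, and $a\ge 5$ gives $2^{a-2}\ge 8$. Hence $n/12-4p\ge 4p>0$, and $2^{a+3}>11$, so the inequality holds. This is the contradiction, so one omitted divisor is $1$ or $2$.

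\textbf{Second claim.} Now suppose the smallest omitted divisor is $2$, so one pair is $(2,n/2)$. Subtracting it, the other pair $(x,n/x)$ with $x\ge 3$ must satisfy
$$x+\frac{n}{x} = \frac{n}{6}+2^{a+3}-4p-6.$$
If $x\ge 9$, monotonicity gives $x+n/x\le 9+n/9$. Showing this is strictly less than the right-hand side reduces to
$$\frac{n}{18}+2^{a+3}-4p-15>0.$$
Here $n/18 = 2^a p/6 \ge 32p/6 > 4p$ using $a\ge5$, and $2^{a+3}>15$, so the inequality holds and $x\ge 9$ is impossible. Therefore $x\in\{3,\dots,8\}$, which is the stated list. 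The candidates $5$ and $7$ can only occur when $p=5$ or $p=7$, but the statement includes them, so no further pruning is needed.

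\textbf{Main obstacle.} The only delicate point is applying monotonicity correctly. It requires $x_i\le\sqrt n$, which holds by choosing the smaller element of each pair. It also requires that the two smaller elements are distinct, so that the second one can be taken to be at least one more than the first. The rest is the explicit inequalities, and these are exactly where the hypothesis $a\ge 5$ is used.
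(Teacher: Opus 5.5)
Your proof is correct. It uses the same basic strategy as the paper: the omitted pairs must sum to exactly $\sigma(n)-2n$, and if the smallest divisors are avoided, that sum falls short. Your execution is sharper, and this removes the paper's computational step.

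The paper works only with the abundancy bound $\sigma(n)>\frac{21}{8}n$ and bounds the small members of the omitted pairs crudely by $\sqrt n$ each. That yields only $n\le 2304$ (resp.\ $n\le 1756$, after using $9\nmid n$ to push the second divisor up to $10$). The paper then has to finish with a finite check of the remaining cases.

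You instead use the exact gap $\sigma(n)-2n=\frac{2n}{3}+2^{a+3}-4p-4$, which keeps an extra $\frac{n}{24}$ (up to lower-order terms) that the $\frac{21}{8}$ bound throws away. You also bound each whole pair through the monotonicity of $x+n/x$ on $[1,\sqrt n]$. The small members then contribute only $3+4$ (resp.\ $9$) rather than $2\sqrt n$. Together these give a strict contradiction for every $a\ge 5$ with no residual cases. Your argument is therefore self-contained and shows exactly where $a\ge 5$ enters, namely $2^{a-2}\ge 8$ and $2^a/6>4$.

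The paper's route needs nothing beyond a lower bound on $h(n)$, so it is quick to set up, but it leaves a computation to be done. Yours needs the exact form of $\sigma(n)$ and is entirely computation-free.
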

\begin{proof} From $a \geq 5$, we have $\sigma(n) > \frac{21}{8}n$. Let the omitted divisor pairs be $(x_1, x_2)$ and $(y_1, y_2)$ with $(x_1 < x_2)$ and $(y_1 < y_2)$. Note that $x_1 < \sqrt{n}$ and $y_1 < \sqrt{n}$. Assume further that $x_1 < y_1$. If $x_1 \geq 3$, then $x_2 \leq \frac{n}{3}$, and $y_2 \leq \frac{n}{4}$. We thus have

$$2n=\sigma(n) - x_1 - x_2 - y_1 - y_2 \geq \frac{21}{8}n - \frac{n}{3} - \frac{n}{4} - 2\sqrt{n},$$

or equivalently $\frac{1}{24}n \leq 2\sqrt{n}, $ which forces $n \leq 2304$. A quick check then confirm that $n=84$ and $n = 156$ are the only numbers of the desired form under this bound which does not have $x_1=1$ or $x_1=2$. Since both have $a < 5$, we do not need to worry about it. 

Thus, we must have $x_1=1$ or $x_1 =2$.

Now, assume that the smallest omitted divisor is $2$, and assume that the next omitted divisor is greater than $8$. Since $9$ cannot be a divisor of $n$, we have then that the next omitted divisor must be at least $10$. Then we have

$$2n= \sigma(n) - 2 - \frac{n}{2} - y_1 - \frac{n}{y_1} \geq   \frac{21}{8}n -2 - \frac{n}{2} - \sqrt{n} - \frac{n}{10},$$

which forces $n \leq 1756$. But this is a finite check and we can see that the only such $n$ is $n=552$, which again does not have $a \geq 5$.

\end{proof}

\begin{proposition}If $n$ is a strongly pseudoperfect number of the form $n= (3)2^a p$ for some prime $p >3$, and $n$ has exactly four omitted divisors, and where $a \geq 5$, then $p=2^{a+1}-3$ and the omitted divisors are exactly $(2, n/2)$ and $(6, n/6)$.  \label{no strongly pseudoperfect of form 3 times p times 2 to a and a at least 5, and exactly two pairs of strong}
\end{proposition}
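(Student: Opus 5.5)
The plan is to combine Lemma \ref{Four omitted divisors, one is 1 or 2} with the same ``solve for $p$'' technique used in the proof of Proposition \ref{3 2 to a times p, exactly one pair of divisors omitted}. Write $A=2^a$, so $A \geq 32$ and $\sigma(n)=4(2A-1)(p+1)$. Let the omitted pairs be $(x_1,n/x_1)$ and $(y_1,n/y_1)$ with $x_1<y_1$ the two small members. The defining equation is
$$4(2A-1)(p+1) - x_1 - \frac{n}{x_1} - y_1 - \frac{n}{y_1} = 2n = 6Ap.$$
By Lemma \ref{Four omitted divisors, one is 1 or 2}, $x_1\in\{1,2\}$. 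If $x_1=2$, the lemma also gives $y_1\in\{3,4,5,6,7,8\}$. Since $y_1\mid n$, the values $5$ and $7$ occur only when $p=5$ or $p=7$. This leaves a finite list of cases. In each case the equation is linear in $p$ with coefficients depending on $A$.

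\textbf{Case $x_1=1$.} Substituting $x_1=1$ and $n/x_1=3Ap$, the equation rearranges to
$$y_1+\frac{n}{y_1} = 8A-5-p(A+4).$$
The left side is positive, so $p(A+4)<8A$, which forces $p<8$, i.e.\ $p\in\{5,7\}$. The right side is then odd. Hence exactly one of $y_1, n/y_1$ is odd, so the pair has the form $(d, 3pA/d)$ with $d$ an odd divisor of $3p$. I will check the four possible values of $d$ for each of $p=5,7$ directly. Each gives a linear equation in $A$ (for example $p=5$, $d=15$ gives $A+15=3A-25$, so $A=20$), and none has a power-of-two solution $A\geq 32$.

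\textbf{Case $x_1=2$.} Writing $y=y_1$, the equation becomes
$$p\Bigl(\frac{A}{2}-\frac{3A}{y}-4\Bigr)=y+6-8A.$$
I will treat each value of $y$ in turn.
\begin{itemize}
\item $y=6$: this gives $-4p = 12-8A$, so $p=2A-3=2^{a+1}-3$. This is exactly the claimed family, with omitted pairs $(2,n/2)$ and $(6,n/6)$.
\item $y=3$: this gives $p=(16A-18)/(A+8)=16-146/(A+8)$. This requires $A+8\mid 146=2\cdot 73$, which is impossible for $A\geq 32$.
\item $y=4$: this gives $p=(32A-40)/(A+16)=32-552/(A+16)$. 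Since $A+16=16(2^{a-4}+1)$ with $a\geq 5$, this requires $16(2^{a-4}+1)\mid 552=8\cdot 69$. That fails because $16\nmid 552$.
\item $y=8$: the coefficient $A/8-4$ is nonnegative for $A\geq 32$, while the right side $14-8A$ is negative. This is a contradiction.
\item $y\in\{5,7\}$: here $y=p$, so I substitute $p=y$ and $n=3yA$ directly. This yields a linear equation in $A$ with no admissible solution.
\end{itemize}

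The only real work is the bookkeeping; the hard part has already been done in Lemma \ref{Four omitted divisors, one is 1 or 2}, which reduces everything to finitely many small candidates. The step most prone to error is the $x_1=1$ case. There one must use the parity observation to pin down which divisor pair can occur, rather than bounding crudely. I would also double-check that no case was lost when $p\in\{5,7\}$ makes $p$ itself one of the small omitted divisors.
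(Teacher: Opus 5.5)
Your proposal is correct and follows essentially the paper's argument: use the preceding lemma to get $x_1\in\{1,2\}$ (and $y_1\le 8$ when $x_1=2$), then solve the resulting linear equation for $p$ case by case, with $y_1=6$ giving $p=2A-3$. The differences are only in how individual subcases are killed. For $x_1=1$ you first use positivity of $y_1+n/y_1$ to force $p\in\{5,7\}$ before the parity reduction, where the paper keeps $p$ general and derives conditions like $A+5\mid 30$ and $A+7\mid 54$. For $x_1=2$ you eliminate odd $y_1$ by direct computation, where the paper uses a single parity observation.
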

\begin{proof}  Based on Lemma \ref{Four omitted divisors, one is 1 or 2}, we have the smallest omitted divisor is either $1$ or $2$
Case I: $x_1=1$
We have then 

\begin{equation} 2n = \sigma(n) - 1 - n - y_1 - \frac{n}{y_1} \end{equation}

for some divisor $y_1$, or equivalently 

\begin{equation} 3p2^{a+1} = 4(p+1)(2^{a+1}-1) - 1 - 3p2^{a}- y_1- \frac{3p2^{a}}{y_1}.  \label{Case I, x1 is 1 explicit form}\end{equation}

Considering Equation \ref{Case I, x1 is 1 explicit form} modulo 2 we get that $y_1+ \frac{3p2^{a}}{y_1}$ is odd. Since the only odd divisors of $n$ are $1,3, p,$ and $3p$, and $1$ is already an omitted divisor, we may without loss of generality in this case assume that $y_1$ is the odd divisor. We thus have only three cases to consider $y_1 =3$, $y_1=p$ and $y_1=3p$.

In the case $y_1=3$, Equation \ref{Case I, x1 is 1 explicit form} becomes

\begin{equation} 3p2^{a+1} = 4(p+1)(2^{a+1}-1) - 4 - 4p2^{a} \label{Case I, x1=1, y1=3}.
    \end{equation}

But then the left-hand side of Equation \ref{Case I, x1=1, y1=3} is divisible by 8 while the right-hand side is congruent to $4 \pmod{8}$, and so we have no solution.

In the next situation, we have $y_1=p$. In this case, Equation \ref{Case I, x1 is 1 explicit form} becomes:

\begin{equation} 3p2^{a+1} = 4(p+1)(2^{a+1}-1) - 1 - 3p2^{a}- p- (3)(2^a) \label{Case I, x1 =1, y1=p}.
\end{equation}

If we set $A=2^a$, and solve for $p$ Equation \ref{Case I, x1 =1, y1=p} becomes

\begin{equation}
p=\frac{5A-5}{A+5},    
\end{equation}

and so we must have $A+5|5A-5$, which means that $A+5|5(A+5)-(5A-5)=30$, but $A$ is at least 32 so this impossible. 

Our last case is $y_1=3p$. Then Equation \ref{Case I, x1 is 1 explicit form} becomes

\begin{equation} 3p2^{a+1} = 4(p+1)(2^{a+1}-1) - 1 - 3p2^{a}- 3p- 2^a.  \label{Case I, x1 =1, y2=3p}\end{equation}

We set $A=2^a$, and Equation \ref{Case I, x1 =1, y2=3p} becomes

$$p=\frac{7A-5}{A+7},$$

    and so $A+7|7A-5$, and $A+7|7(A+7) - (7A-5)= 54$. This yields a contradiction since $A \geq 32$, and thus $A+7 \geq 39$ and the only divisor of $54$ which is greater than $39$ is $54$, and $54-7=47$ is not a power of 2. 

Case II: We assume that $x_1=2$. We obtain then

\begin{equation}
    2n = \sigma(n) - 2 - \frac{n}{2} - y_1 - \frac{n}{y_1}, 
\end{equation} 

or 

\begin{equation}
   3p2^{a+1} = 4(p+1)(2^{a+1}-1) - 2 - 3p2^{a-1} - y_1 - \frac{3p2^a}{y_1}.
\end{equation}

In this case, by Lemma \ref{Four omitted divisors, one is 1 or 2}, we have $y_1 \in \{3,4,5,6, 7,8\}$. However, a parity argument shows that we must have $y_1$ even, and so we have $y_1=4$, $y_1=6$, or $y_1=8$ and thus

First, consider the case where $y_1=4$. In this situation we have

\begin{equation} 3p2^{a+1} = 4(p+1)(2^{a+1}-1) - 6 - 3p2^{a-1} -  3p2^{a-2}. \label{Case II, x1=2, y1=4}
\end{equation}

If we set $2^a=A$, and solve for $p$ in Equation \ref{Case II, x1=2, y1=4},we get

\begin{equation}
     6pA = 4(p+1)(2A-1) - 6 - (3/2)pA -  (3/4)pA,
\end{equation}

and so

$$p = \frac{32A-40}{A+16},$$

which via similar logic to earlier yields that $A+16|32(A+16)-(32A-40)=552$ which has no solutions.

Next, we consider the case where $y_1=6$. We have then

\begin{equation}
    3p2^{a+1} = 4(p+1)(2^{a+1}-1) - 8 - 3p2^{a-1} - p2^{a-1}.
\end{equation}

We as before set $2^a=A$, and solve for $p$ to obtain

$p= 2A-3$. 

Finally, we consider the case where $y_1=8$.
We have then

    \begin{equation}
3p2^{a+1} = 4(p+1)(2^{a+1}-1) - 10 - 3p2^{a-1}-  \frac{3p2^{a}}{8}.\label{almost end equation}
        \end{equation}

Taking Equation (\ref{almost end equation}) and solving for $p$  we obtain that 
$$    p=\frac{16(4A-7)}{32-A},$$
which is negative for $a>5$, and thus has no solutions. The case of $a=5$ would involve a division by zero and so also does not work. \end{proof}

\begin{proposition} Let $n$ be a strongly-pseudoperfect number of the form $n= (3)2^a p$ for some prime $p >3 $, where $n$ has exactly four omitted divisors. Then either $n \in \{156,552\}$, or $n = 3(2^a)(2^{a+1}-3)$ with omitted divisor pairs $(2,n/2)$ and $(6,n/6)$. \label{3 times power of 2 times p and four omitted}
\end{proposition}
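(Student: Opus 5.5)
The plan is to split on the exponent $a$. For $a \geq 5$, Proposition \ref{no strongly pseudoperfect of form 3 times p times 2 to a and a at least 5, and exactly two pairs of strong} already gives the statement: $p=2^{a+1}-3$ and the omitted pairs are $(2,n/2)$ and $(6,n/6)$. So only $1 \leq a \leq 4$ remains. (If $a=0$, then $n=3p$ is odd and deficient, since $h(3p) < \frac{4}{3}\cdot\frac{6}{5} < 2$, so it cannot be strongly pseudoperfect.)

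For each fixed $a \in \{1,2,3,4\}$, set $A = 2^a$. Then $\sigma(n) = 4(2A-1)(p+1)$, and the condition for omitting two pairs is
$$\sigma(n) - x_1 - \frac{n}{x_1} - y_1 - \frac{n}{y_1} = 2n = 6Ap.$$
I would split according to which of $x_1, y_1$ are divisible by $p$. Each omitted pair has exactly one member divisible by $p$, so there are three subcases. In each, write the two small divisors as $x = \varepsilon_1 2^{b}$ and $y = \varepsilon_2 2^{c}$ with $\varepsilon_i \in \{1,3\}$. The partner divisors are then $3\cdot 2^{a-b}p/\varepsilon_1$ and the analogous expression for $y$. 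Since $a$ is small, the small divisors range over a bounded set: the divisors of $3\cdot 2^a$, of which there are at most $10$.

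The key observation is that for fixed $a$ and a fixed choice of the two small divisors $x, y$ (the ones not divisible by $p$), the equation is linear in $p$:
$$p\Bigl(4(2A-1) - 6A - \tfrac{3A}{x} - \tfrac{3A}{y}\Bigr) = x + y - 4(2A-1).$$
So $p$ is determined uniquely. This leaves a finite enumeration of at most $\binom{10}{2}$ choices of $\{x,y\}$ for each of four values of $a$. For each choice, we keep only those where $p$ is a prime greater than $3$. We also require that the two pairs be distinct and that neither pair be self-paired, because $n$ is not a square.

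The main obstacle is simply doing this enumeration without error. To cut it down I would first use parity, as in the proof of Proposition \ref{no strongly pseudoperfect of form 3 times p times 2 to a and a at least 5, and exactly two pairs of strong}. Reducing modulo $2$ (and modulo $4$ where helpful) forces the number of odd omitted divisors to have the right parity. I would also use the sign of the coefficient of $p$ to discard most cases, as in Proposition \ref{3 2 to a times p, exactly one pair of divisors omitted}.

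The surviving solutions should be the following. For $a \leq 4$, the family $p = 2A-3$ with pairs $(2,n/2)$ and $(6,n/6)$ gives $a=2, p=5$ ($n=60$), $a=3, p=13$ ($n=312$), and $a=4, p=29$ ($n=1392$). These belong to the stated family. Beyond these, only sporadic solutions remain: $n=156$ (with pairs $(3,52)$ and $(12,13)$) and $n=552$ (found in Lemma \ref{Four omitted divisors, one is 1 or 2}). For $a=1$ I expect no solutions, since then $3A/x$ is large relative to the other terms. Confirming that nothing else appears, for instance $n=84$ with four omitted divisors, is exactly what the finite check must settle, and I would present it as a short table of the cases.
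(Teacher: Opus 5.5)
Your reduction to $1\le a\le 4$ via the $a\ge 5$ proposition is sound. So is your observation that, for fixed $a$ and fixed small divisors $x,y$ (the members of the omitted pairs prime to $p$), the equation is linear in $p$. This makes the finite check more explicit than the paper's route, which bounds $p$ through $p \mid 4(2^{a+1}-1)-x_1-y_1$ and then searches all $n\le 5424$.

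The genuine problem is the outcome you predict for the enumeration. Take $a=2$, $A=4$, $\{x,y\}=\{3,4\}$. Your coefficient is $4(2A-1)-6A-\tfrac{3A}{x}-\tfrac{3A}{y}=28-24-4-3=-3$, and the right side is $x+y-4(2A-1)=-21$. So $p=7$ and $n=84$, with omitted pairs $(3,28)$ and $(4,21)$. Indeed
\[
\sigma(84)-2\cdot 84 = 224-168 = 56 = (3+28)+(4+21).
\]
Checking the six pair sums $85,44,31,25,20,19$ shows this is the only representation of $84$. So $84=3\cdot 2^2\cdot 7$ is strongly pseudoperfect with exactly four omitted divisors, yet $84\notin\{156,552\}$ and $84\neq 3\cdot 2^2\cdot(2^3-3)=60$. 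Your step ``confirm that nothing else appears, for instance $n=84$'' cannot succeed: the statement as written is false, so no completion of your plan can prove it.

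Carrying out your enumeration completely gives:
\begin{itemize}
\item no solutions for $a=1$;
\item $n=60,84,156$ for $a=2$;
\item $n=312,552$ for $a=3$;
\item $n=1392$ for $a=4$.
\end{itemize}
All your other predictions are correct, including the pairs $(3,52),(12,13)$ for $156$ and the family members $60,312,1392$. The correct conclusion is therefore $n\in\{84,156,552\}$, or $n=3\cdot 2^a(2^{a+1}-3)$ with pairs $(2,n/2)$ and $(6,n/6)$.

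The paper's own proof runs into the same point. Its search returns $84$, $156$ and $552$, and it then discards $84$ by asserting that $84$ belongs to the family at $a=2$. That is false, since the family member at $a=2$ is $60$.
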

\begin{proof} By Proposition \ref{no strongly pseudoperfect of form 3 times p times 2 to a and a at least 5, and exactly two pairs of strong}, we may assume that $a < 5$.

We have then for some divisors $x_1$ and $y_1$,  where we may assume that $(x_1y_1,p)=1$, that

\begin{equation} 2n=\sigma(n) - x_1 -\frac{n}{x_1} - y_1 - \frac{n}{y_1}. \label{2n = sigma with two omitted pairs again} \end{equation}

Note that we are using a different assumption about $x_1$ and $y_1$ here. While in the previous proof, we choose $x_1$ and $y_1$ as the smallest omitted divisors, here we are choosing them to be the pair relatively prime to $p$. 

Equation \ref{2n = sigma with two omitted pairs again} becomes

\begin{equation}
    3p2^{a+1} = 4(p+1)(2^{a+1}-1) - x_1 - \frac{3p2^a}{x_1} - y_1 - \frac{3p2^a}{y_1}.
\end{equation}

which after we clear denominators becomes

\begin{equation}
    3p2^{a+1}x_1 y_1 =  4(p+1)(2^{a+1}-1)x_1y_1 - x_1^2y_1 - 3p2^ay_1 - y_1^2x_1 - 3p2^a x_1,
\end{equation}

    and thus we have $$p|4(p+1)(2^{a+1}-1)x_1y_1 -   x_1^2y_1 - y_1^2x_1=x_1y_1(4(p+1)(2^{a+1}-1) -x_1 -y_1),$$ and since $(x_1y_1,p)=1$, this becomes

$$    p|4(p+1)(2^{a+1}-1) -   x_1 - y_1.  $$

     which implies that

\begin{equation}
    p|4(2^{a+1}-1) - x_1 -y_1.\label{2nd p divisibility argument}
\end{equation}

We note that $4(2^{a+1}-1) - x_1 -y_1 >0$, since $x_1, y_1 \leq 3(2^a)$, and $x_1 \neq y_1$ and so $p < 4(2^{a+1}-1)$. Since $a \leq 4$, we have 
    $p \leq 124$, and since $p$ is prime we in fact have
    $p \leq 113$. Thus, we have $n \leq (113)(16)(3)= 5424$. This is now a finite set of cases to check.  We get three values, $n=84$, $n=156$, and $n=552$. But since $84$ is of the form in question with $a=2$, we only care about the other two.

\end{proof}

We will now prove Theorem \ref{Mersenne result}

\begin{proof}[Proof of \ref{Mersenne result}]
    
Let $n$ be a strongly pseudoperfect number of the form $n=2^kp$ where $p=2^m-1$ and $p$ is prime. Then

\begin{equation}
    \sigma(n)=(1+p)(2^{k+1}-1)=2^m(2^{k+1}-1)
\end{equation}

Let $A$ be the sum of omitted divisors. Then

\begin{equation}
    A=\sigma(n)-2n=2^m(2^{k+1}-1)-2^{k+1}(2^m-1)=2^{k+1}-2^m
\end{equation}

Since $n$ is strongly pseudoperfect, $A\geq0$. Thus $2^{k+1}\geq2^m$ and $k+1\geq m$. We therefore have two cases: Case 1 where $k+1=m$ and Case 2 where $k+1>m$.\\

\textbf{Case 1}

Assume that $k+1=m$. Then

\begin{equation}
    A=2^{k+1}-2^m=0,
\end{equation}
so $\sigma(n)=2n$ and $n$ is perfect.\\

\textbf{Case 2}

Assume that $k+1>m$. Then we have

\begin{equation}
    A=2^m(2^{k+1-m}-1),
\end{equation}

and so $2^m\mid A$. The divisors of $n$ occur in pairs of the form $(2^i,2^{k-i}(2^m-1))$. Define $s_i$ as the sum of each omitted pair, where

\begin{equation}
    s_i=2^i+2^{k-i}(2^m-1)
\end{equation}

for $0\leq i\leq k$. We have two cases: Case 2A where an omitted pair has a divisor with a small power of 2 satisfying $i<m$ or $k-i<m$, and Case 2B where the omitted pairs are within the middle divisors of $n$ satisfying $i\geq m$ and $k-i\geq m$.\\

\textbf{Case 2A:} $i<m$ or $k-i<m$

Assume that at least one omitted divisor pair of $n$ satisfies $i<m$ or $k-i<m$. Among the divisor pairs with this property, choose the pair containing a divisor with the smallest possible power of 2. Let this smallest exponent be $v$, where $v<m$, and suppose no omitted pair has a divisor with a power of 2 less than $v$.

Since $v$ is the smallest possible exponent of 2 appearing in any omitted divisor pair, any omitted pair containing $2^v$ must have one of two forms: $(2^v,2^{k-v}(2^m-1))$ or $(2^{k-v},2^v(2^m-1))$.

If neither of these pairs were omitted, this would contradict the choice of $v$.

Every divisor of $n$ is either a power of 2 or a power of 2 times $2^m-1$. Since $2^m-1$ is odd, the exact power of 2 dividing a divisor is determined by the exponent of 2 that appears explicitly. The only divisors divisible by $2^v$ but not $2^{v+1}$ are $2^v$ and $2^v(2^m-1)$, appearing in the pairs above. All other omitted pairs are divisible by both $2^v$ and $2^{v+1}$ since their smallest exponent is larger than $v$.

If exactly one of the two pairs above is omitted, the total sum of omitted pairs contains an unpaired $2^v$ term and is therefore not divisible by $2^{v+1}$. Since $v+1\leq m$ and $2^m\mid A$, this is impossible. Therefore, if one of the two pairs is omitted, the other must also be omitted.
Their combined sum is

\begin{equation}
    (2^v+2^{k-v})2^m.
\end{equation}

Since $v<m$, we have $k-v+m\geq k+1$, so their sum is at least $2^{k+1}$ and therefore exceeds $A=2^{k+1}-2^m$. Thus both pairs cannot be included in the omitted sum.
In the case where $v=k-v$, the only pair containing $2^v$ is $(2^v,2^v(2^m-1))$, and its sum is

\begin{equation}
    s_v=2^v+2^v(2^m-1)=2^{v+m}.
\end{equation}

Since $k=2v$ and $v<m$, we have $m\geq v+1$. Therefore

\begin{equation}
    s_v=2^{v+m}\geq2^{2v+1}=2^{k+1}>A
\end{equation}

so this pair is also too large to be omitted. Therefore it is enough to rule out the smallest exponent that appears. Hence all omitted pairs must satisfy $i\geq m$ and $k-i\geq m$.

Thus we must rely on Case 2B to find middle omitted factors that sum to $A$.\\

\textbf{Case 2B:} $i\geq m$ and $k-i\geq m$

Since all omitted pairs must satisfy $i\geq m$ and $k-i\geq m$, or equivalently $m\leq i\leq k-m$, the largest possible sum of omitted divisors is obtained by taking all such pairs.  Then

\begin{equation}
    T = \sum_{i=m}^{k-m}\left(2^i+2^{k-i}(2^m-1)\right)
\end{equation}

is an upper bound for this sum.

Since $\sum_{i=m}^{k-m}2^{k-i}=\sum_{i=m}^{k-m}2^i$, we get

\begin{equation}
    T=2^m\sum_{i=m}^{k-m}2^i
    =2^m(2^{k-m+1}-2^m)
    =2^{k+1}-2^{2m}
\end{equation}

But

\begin{equation}
    2^{k+1}-2^{2m}<2^{k+1}-2^m=A
\end{equation}
so even the sum of all omitted divisors with $m\leq i\leq k-m$ is too small. Note that if $k - m < m$, then the above sums are empty, but then $T=0<A$, and so the conclusion still holds.

Case 2A proved that a divisor pair containing a divisor whose exact power of $2$ has an exponent less than $m$ cannot be part of the omitted factors. Case 2B proved that the maximum sum of all remaining divisors is too low to reach the required omitted sum $A$. Therefore Case 2 is impossible. The only valid case is Case 1 where $k+1=m$, and therefore $n$ is perfect.
\end{proof}

\begin{proposition} Fix a positive integer $m >2$. Assume $n$ is a positive integer such that all prime factors of $n$ are $\pm 1 \pmod {m}$ and that an odd number of prime factors (counted with multiplicity) $p$ satisfy $p \equiv -1 \pmod m$. Then $n$ is not strongly pseudoperfect. \label{1 and -1 mod m result}
\end{proposition}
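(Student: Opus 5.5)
The plan is to work modulo $m$ and exploit the pairing structure. Every divisor $d$ of $n$ is a product of primes that are $\pm 1 \pmod m$, so $d \equiv \pm 1 \pmod m$. In particular $n \equiv (-1)^{N} \equiv -1 \pmod m$, where $N$ is the number of prime factors $p \equiv -1 \pmod m$ counted with multiplicity, and $N$ is odd by hypothesis.

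\textbf{Step 1: each divisor pair sums to $0$ modulo $m$.} Take a pair $(d, n/d)$. If $d \equiv \varepsilon \pmod m$ with $\varepsilon = \pm 1$, then $n/d \equiv n\,\varepsilon^{-1} \equiv -\varepsilon \pmod m$, since $\varepsilon^{-1}=\varepsilon$. Hence $d + n/d \equiv 0 \pmod m$.

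\textbf{Step 2: there are no self-paired divisors.} If $d = n/d$, then $n = d^2$ is a perfect square, so every prime occurs to an even power and $N$ would be even. This contradicts the hypothesis, so $n$ is not a square.

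\textbf{Step 3: the sum of any symmetric set is divisible by $m$.} Let $S$ be a subset of the divisors of $n$ with $d \in S$ if and only if $n/d \in S$. By Step 2, $S$ is a disjoint union of genuine pairs $\{d, n/d\}$ with $d \ne n/d$. By Step 1, each pair contributes $0 \pmod m$, so $\sum_{d\in S} d \equiv 0 \pmod m$.

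\textbf{Step 4: the contradiction.} If $n$ were strongly pseudoperfect, we would have $2n \equiv 0 \pmod m$. But $2n \equiv -2 \pmod m$, and $m > 2$ gives $-2 \not\equiv 0 \pmod m$. So $n$ is not strongly pseudoperfect.

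The only step that needs care is Step 2, ruling out the middle divisor $\sqrt{n}$, since a self-paired divisor would contribute a single $\pm 1$ rather than a full pair. The parity hypothesis on $N$ handles it exactly. Note also that $\gcd(n,m)=1$ is automatic because every prime factor of $n$ is $\pm 1 \pmod m$, so no invertibility issues arise.

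The statement generalizes Proposition \ref{2 mod 3 and 3 mod 4} in the cases where all prime factors are $\pm1$, i.e. $m=3,4$.
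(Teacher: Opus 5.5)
Your proof is correct and follows the paper's argument: each pair $(d, n/d)$ sums to $0 \pmod m$, so any symmetric set sums to $0 \pmod m$, while $2n \equiv -2 \not\equiv 0 \pmod m$ since $m>2$. Your Step 2 (no self-paired divisor, because $n$ is not a square) spells out something the paper leaves implicit: its claim that exactly one of $d$ and $n/d$ is $1 \pmod m$ already forces $d \neq n/d$, because $1 \not\equiv -1 \pmod m$.
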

\begin{proof}  Note that $n \equiv -1 \pmod m$ For any divisor $d$ of $n$, exactly one of $d$ and $n/d$ is $1 \pmod{m}$ and the other is $-1 \pmod {m}$, so $d + \frac{n}{d} \equiv 0 \pmod{m}$. Thus, if $S$ is a strongly pseudoperfect representation of $n$, then $\sum_{d \in S} d  \equiv 0 \pmod {m}$. But $2n \equiv -2 \pmod {m}$, and since $m>2$, this is a contradiction, since we cannot have $0 \equiv -2 \pmod {m}$.
\end{proof}

Proposition \ref{1 and -1 mod m result} is a generalization of Proposition \ref{2 mod 3 and 3 mod 4} since the only way a number $n$ can be $2 \pmod{3}$ is if it has all prime factors congruent to $\pm 1 \pmod{3}$ and with an odd number of them congruent to $-1 \pmod {3}$, and a similar remark applies to a number being $3 \pmod 4$.

\begin{proposition} If $n$ is not a perfect square, and $0 < \sigma(n) -2n < 2\sqrt{n}$, then $n$ is not strongly pseudoperfect.   
\end{proposition}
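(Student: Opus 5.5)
Suppose $n$ is strongly pseudoperfect with representation $S$, and let $A = \sigma(n) - 2n$ be the sum of the omitted divisors. The plan is to show that $A$ is either $0$ or at least $2\sqrt{n}$, which contradicts the hypothesis $0 < A < 2\sqrt{n}$.

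The first step is to note that, since $n$ is not a perfect square, no divisor pairs with itself. So the divisors of $n$ split into disjoint pairs $\{d, n/d\}$ with $d \neq n/d$. The defining condition of $S$ ($d \in S$ iff $n/d \in S$) means the complement of $S$ is also closed under $d \mapsto n/d$. Hence the omitted divisors form a union of such pairs, and $A = \sum (d + n/d)$ over the omitted pairs.

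The second step is to use $A > 0$, which forces at least one omitted pair $\{d, n/d\}$. By AM--GM, $d + n/d \geq 2\sqrt{d \cdot n/d} = 2\sqrt{n}$. Every other omitted pair contributes a positive amount, so $A \geq 2\sqrt{n}$, contradicting $A < 2\sqrt{n}$. (Equality in AM--GM would need $d = \sqrt{n}$, which is impossible here, but the non-strict bound already suffices.)

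There is no real obstacle; the only point needing care is the first step, namely that non-squareness rules out a self-paired divisor $\sqrt{n}$. Such a divisor could otherwise be omitted alone and contribute only $\sqrt{n}$, which is less than $2\sqrt{n}$. This is exactly why the hypothesis is needed: for example, $11025$ omits $105$ alone.
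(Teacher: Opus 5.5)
Your proof is correct and is the same argument as the paper's. Since $n$ is not perfect, at least one pair must be omitted, and $d + n/d \geq 2\sqrt{n}$ then forces the omitted sum to be at least $2\sqrt{n}$, contradicting the hypothesis. You also spell out why non-squareness matters, namely that it rules out omitting the self-paired divisor $\sqrt{n}$ by itself, which the paper leaves implicit.
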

\begin{proof} Assume $n$ satisfies $0 < \sigma(n) -2n < 2\sqrt{n}$. Since $n$ is not perfect, any strongly pseudoperfect representation $S$ of $n$ must have at least one omitted pair. But the function $f(x)= x + n/x$ has a minimum of $2\sqrt{n}$ and so the sum of the elements of $S$ is at most $\sigma(n) - 2\sqrt{n}$ which is less than $2n$.
\end{proof}

\begin{proposition} Suppose that $n$ is not a perfect square, is strongly pseudoperfect and all prime divisors of $n$ are 1 mod $3$. Suppose further that $\tau(n) \equiv 0 \pmod{3}$. Then $\sigma(n) -2n > 4\sqrt{n}$.
\end{proposition}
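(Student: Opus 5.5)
The plan is to work modulo $3$, in the spirit of Proposition \ref{1 and -1 mod m result}, and then finish with the size argument from the preceding proposition. Every prime divisor of $n$ is $1 \pmod 3$, so every divisor $d$ of $n$ satisfies $d \equiv 1 \pmod 3$. In particular $n \equiv 1 \pmod 3$, so $2n \equiv 2 \pmod 3$.

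Next I compute $\sigma(n)$ modulo $3$. It is a sum of $\tau(n)$ terms, each congruent to $1$, so $\sigma(n) \equiv \tau(n) \equiv 0 \pmod 3$ by hypothesis. Hence $\sigma(n) - 2n \equiv -2 \equiv 1 \pmod 3$. Since this is nonzero modulo $3$, $n$ is not perfect. So any strongly pseudoperfect representation must omit at least one divisor.

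Now fix a strongly pseudoperfect representation $S$ of $n$. Because $n$ is not a perfect square, no divisor is paired with itself, so the omitted divisors form $k \geq 1$ disjoint pairs $(d, n/d)$. The sum of the omitted divisors equals $\sigma(n) - 2n$. Each omitted pair contributes $d + n/d \equiv 1 + 1 = 2 \pmod 3$, so $\sigma(n) - 2n \equiv 2k \pmod 3$. Combining this with $\sigma(n) - 2n \equiv 1 \pmod 3$ gives $2k \equiv 1$, that is, $k \equiv 2 \pmod 3$. Together with $k \geq 1$, this forces $k \geq 2$.

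Finally, as in the previous proposition, $f(x) = x + n/x$ is minimized at $x = \sqrt{n}$. Since $\sqrt{n}$ is not a divisor, each omitted pair has sum strictly greater than $2\sqrt{n}$. With at least two omitted pairs, $\sigma(n) - 2n > 4\sqrt{n}$.

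The only step needing care is the deduction $k \geq 2$ rather than merely $k \geq 1$. It depends on ruling out $k = 0$ (non-perfectness, which comes from the congruence) and on ruling out $k = 1$ (which comes from the residue computation). Both follow directly from the mod $3$ bookkeeping, so I expect no real obstacle.
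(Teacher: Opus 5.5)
Your proof is correct and follows essentially the same route as the paper: the paper also uses $\sigma(n)\equiv\tau(n)\equiv 0 \pmod 3$ and $d+n/d\equiv 2 \pmod 3$ to rule out zero or one omitted pair, and then bounds each omitted pair below by $2\sqrt{n}$. Your general computation $k\equiv 2 \pmod 3$ is the same one the paper carries out in its subsequent generalization to odd $m$, and your remark that $\sqrt{n}$ is not a divisor supplies the justification for the strict inequality that the paper leaves implicit.
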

\begin{proof} Note that we have $\sigma(n) \equiv 0 \pmod{3}$. Since $\sigma(n)$ and $2n$ disagree mod 3, $n$ is not perfect.  For any divisor $d$ of $n$, $d + \frac{n}{d} \equiv 2 \pmod {3}$.

Now note that for any divisor $d$, $\sigma(n) - d - \frac{n}{d} \equiv 0 - (2) \equiv 1 \pmod {3}$. But $2n \equiv 2 \pmod{3}$, and so $2n$ cannot be equal to $\sigma(n) - d - \frac{n}{d}$, and thus any strongly pseudoperfect representation of $n$ must omit at least two divisor pairs, and so $\sigma(n) > 2n + 4\sqrt{n}$.
\end{proof}

We can generalize this slightly to obtain:

\begin{theorem}
 Let $m$ be an odd number. Suppose that $n$ is not a perfect square, is strongly pseudoperfect and all prime divisors of $n$ are 1 mod $m$ for $m >1$. Suppose further that $\tau(n) \equiv 0 \pmod{m}$. Then $\sigma(n) -2n > 2(m-1)\sqrt{n}$.   
\end{theorem}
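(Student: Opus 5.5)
We imitate the proof of the preceding mod $3$ proposition, working modulo $m$ throughout. All prime divisors of $n$ are $1 \pmod m$, so every divisor $d$ of $n$ satisfies $d \equiv 1 \pmod m$. Consequently $n \equiv 1 \pmod m$, each pair sum satisfies $d + n/d \equiv 2 \pmod m$, and $\sigma(n) \equiv \tau(n) \equiv 0 \pmod m$.

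Since $n$ is not a perfect square, every divisor pairs with a distinct divisor. A strongly pseudoperfect representation $S$ therefore omits some number $j$ of complete pairs. Summing the omitted pairs gives
$$2n = \sigma(n) - \sum_{\text{omitted pairs}} \left(d + \frac{n}{d}\right) \equiv 0 - 2j \pmod m.$$
Combining this with $2n \equiv 2 \pmod m$ yields $2j \equiv -2 \pmod m$. Because $m$ is odd, $2$ is invertible modulo $m$, so $j \equiv -1 \pmod m$. The number of omitted pairs $j$ is nonnegative, so $j \geq m-1$. Here $m>1$ guarantees $m-1 \geq 1$; in particular $n$ is not perfect.

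Next we bound the omitted total. For each omitted pair, $d + n/d \geq 2\sqrt{n}$, with strict inequality because $d \neq \sqrt{n}$ when $n$ is not a square. Summing over $j \geq m-1$ omitted pairs gives
$$\sigma(n) - 2n = \sum_{\text{omitted pairs}} \left(d + \frac{n}{d}\right) > 2j\sqrt{n} \geq 2(m-1)\sqrt{n},$$
which is the desired bound.

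The only delicate step is the congruence $j \equiv -1 \pmod m$. It depends on dividing by $2$ modulo $m$, which is exactly where the hypothesis that $m$ is odd is used. No step is a real obstacle; the argument is a direct generalization of the $m=3$ case.
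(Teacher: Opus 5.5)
Your proof is correct and follows the paper's argument essentially step for step: reduce modulo $m$ to get $2j \equiv -2 \pmod m$, invert $2$ because $m$ is odd to force at least $m-1$ omitted pairs, and bound each pair sum below by $2\sqrt{n}$. You are slightly more careful than the paper, which only states the bound $\geq 2\sqrt{n}$, in that you explicitly derive the strict inequality from $n$ not being a perfect square.
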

\begin{proof} Since all divisors of $n$ are $1 \pmod {m}$, we have $\sigma(n) \equiv \tau(n) \equiv 0 \pmod m$, and so omitting $k$ pairs yields $2k \equiv -2 \pmod {m}$, and thus $k \equiv -1 \pmod{m}$, and so $k \geq m-1$ from which the result follows, since each pair of divisors has a sum of at least $2\sqrt{n}$.
    \end{proof}

\section{Open questions and further work}

 Given Larsen's result, an obvious question is whether there is some constant $C_2$ and some set of reasonably simple conditions where those conditions along with $\sigma(n)>C_2n$, imply that $n$ is strongly pseudoperfect. At first glance, Proposition \ref{1 and -1 mod m result} and Theorem \ref{Mersenne result} suggest that this is not the case. But Theorem \ref{Mersenne result} is less of an obstruction than one might think since all the numbers that satisfy its hypotheses have $\sigma(n)/n < 8/3$.

Theorem \ref{Power result} and Corollary \ref{Infinitely many odd strongly pseudoperfects}, show that there are infinitely many odd strongly pseudoperfect numbers. Theorem \ref{Power result} motivates the following definition. A number $n$ is said to be \textit{primitive strongly pseudoperfect} if $n$ is not of the form $n=m^k$, where $m$ is a strongly pseudoperfect number and $k \geq 2$. This is an analog to the standard notion of a primitive pseudoperfect number, which is a number $n$ which is pseudoperfect and is not a multiple of a smaller pseudoperfect number. Simply using multiples in this definition does not work since a multiple of a strongly pseudoperfect number may end up not being strongly pseudoperfect. However, Theorem \ref{Power result} shows that powers behave in a way similar to how multiples behaved in the pseudoperfect situation. In this context, a natural question is the following.

\begin{question} Are there infinitely many odd numbers which are primitive strongly pseudoperfect? \label{infinitely many primitive strongly pseudoperfect} \end{question}

We suspect the answer to Question \ref{infinitely many primitive strongly pseudoperfect} is yes. 

Theorem \ref{Power result} required that $n$ be a strongly pseudoperfect number with a representation which contains $(1,n)$. Thus, if every strongly pseudoperfect number is extremely strongly pseudoperfect then we would have the nice result that if $n$ is strongly pseudoperfect, so is $n^k$. Thus, we also have the following related question.

\begin{question} For any $n$ where $n$ is a strongly pseudoperfect number, is $n^k$ strongly pseudoperfect for all $k$?
\end{question}

Since any squarefree number is not a non-trivial power, it is worth noting that $937365= (3)(5)(11)(13)(19)(23)$ is squarefree and strongly pseudoperfect. If there are infinitely many such numbers then this would imply a positive answer to question \ref{infinitely many primitive strongly pseudoperfect}. 

Euler showed that any odd perfect number $n$ must be of the form $n=p^e m^2$ where $p$ is an odd prime and $p \equiv e \equiv 1 \pmod{4}$, and $m$ is an odd integer such that $p \not \mid m$. If there were no strongly pseudoperfect numbers of Euler's form, then there would be no odd perfect numbers. However, we do not get so lucky. In particular, $1102725=3^2 5^2 13^2 29$ is of Euler's form and is strongly pseudoperfect. If $n$ is of Euler's form, then so is $n^5$, so there are infinitely many strongly pseudoperfect numbers of Euler's form. We thus ask the following.

\begin{question} Are there infinitely many odd $n$ where $n$ is primitive strongly pseudoperfect, and $n=p^e m^2$ where $p$ is an odd prime and $p \equiv e \equiv  1 \pmod{4}$, such that $p \not \mid m$?\end{question}

Another closely related question is the following.

\begin{question} Does the set of strongly pseudoperfect numbers have positive natural density?
\end{question}

\section{Computations and data}

\begin{table}[H]
\centering
\caption{The first ten odd strongly pseudoperfect numbers and their omitted divisors. The \# column gives the number of solutions. For entries with many solutions, only the first few are displayed.}
\label{tab:odd-solutions}
\small
\begin{tabular}{|c|c|p{11cm}|}
\hline
\textbf{$n$} & \textbf{\#} & \textbf{Omitted divisors} \\
\hline

11025 & 1 &
$\{25,441,35,315,105\}$ \\
\hline

12285 & 3 &
$\{15,819,27,455,35,351,39,315,65,189\}$ \newline
$\{15,819,21,585,35,351,63,195,91,135\}$ \newline
$\{13,945,27,455,35,351,63,195,91,135\}$ \\
\hline

16065 & 6 &
$\{15,1071,45,357,51,315,63,255,105,153\}$ \newline
$\{15,1071,35,459,63,255,85,189,105,153\}$ \newline
$\{21,765,27,595,45,357,51,315,119,135\}$ \newline
$\{17,945,35,459,45,357,63,255,119,135\}$ \newline
$\{21,765,27,595,35,459,85,189,119,135\}$ \newline
$\{17,945,27,595,63,255,85,189,119,135\}$ \\
\hline

17325 & 1 &
$\{9,1925,21,825,33,525,45,385,105,165\}$ \\
\hline

23625 & 1 &
$\{21,1125,45,525,105,225,125,189,135,175\}$ \\
\hline

28665 & 3 &
$\{15,1911,21,1365,35,819,91,315,147,195\}$ \newline
$\{21,1365,35,819,39,735,49,585,63,455,91,315,147,195\}$ \newline
$\{15,1911,35,819,63,455,65,441,91,315,117,245,147,195\}$ \\
\hline

31185 & 29 &
$\{11,2835,15,2079,33,945,35,891,77,405\}$ \newline
$\{11,2835,15,2079,27,1155,45,693,81,385\}$ \newline
$\{11,2835,15,2079,21,1485,81,385,99,315\}$ \newline
$\{15,2079,21,1485,27,1155,35,891,45,693,81,385,99,315\}$ \newline
$\{7,4455,33,945,35,891,63,495,105,297\}$ \newline
$\{11,2835,27,1155,33,945,45,693,55,567,63,495,105,297\}$ \newline
$\{11,2835,21,1485,35,891,55,567,63,495,81,385,105,297\}$ \newline
$\{11,2835,21,1485,33,945,55,567,63,495,99,315,105,297\}$ \newline
$\{21,1485,27,1155,33,945,35,891,45,693,55,567,63,495,99,315,105,297\}$ \newline
$\{11,2835,15,2079,55,567,77,405,81,385,99,315,105,297\}$ \newline
$\vdots$ \newline
\textit{(29 solutions total)} \\
\hline

36225 & 5 &
$\{15,2415,45,805,63,575,75,483,105,345\}$ \newline
$\{21,1725,23,1575,69,525,75,483,115,315\}$ \newline
$\{21,1725,23,1575,63,575,75,483,161,225\}$ \newline
$\{15,2415,35,1035,69,525,105,345,175,207\}$ \newline
$\{25,1449,45,805,63,575,69,525,75,483,115,315,175,207\}$ \\
\hline

42525 & 1 &
$\{27,1575,35,1215,45,945,63,675,75,567\}$ \\
\hline

45045 & 695 &
$\{5,9009,21,2145,35,1287,39,1155,45,1001\}$ \newline
$\{9,5005,15,3003,21,2145,33,1365,35,1287,45,1001,63,715\}$ \newline
$\{7,6435,13,3465,15,3003,45,1001,65,693\}$ \newline
$\{9,5005,11,4095,21,2145,45,1001,55,819,63,715,65,693\}$ \newline
$\{9,5005,11,4095,33,1365,35,1287,39,1155,45,1001,77,585\}$ \newline
$\{13,3465,15,3003,21,2145,33,1365,35,1287,45,1001,55,819,63,715,77,585\}$ \newline
$\{11,4095,13,3465,15,3003,33,1365,35,1287,65,693,77,585\}$ \newline
$\{11,4095,13,3465,33,1365,35,1287,39,1155,45,1001,63,715,65,693,77,585\}$ \newline
$\{9,5005,15,3003,33,1365,39,1155,45,1001,55,819,63,715,65,693,77,585\}$ \newline
$\{9,5005,13,3465,21,2145,33,1365,35,1287,63,715,91,495\}$ \newline
$\vdots$ \newline
\textit{(695 solutions total)} \\
\hline

\end{tabular}
\end{table}

\begin{table}[H]
\centering
\caption{The first ten even strongly pseudoperfect numbers and their omitted divisors.}
\label{tab:even-solutions}
\small
\begin{tabular}{|c|c|p{11cm}|}
\hline
\textbf{$n$} & \textbf{\#} & \textbf{Omitted divisors} \\
\hline

6 & 1 & Perfect \\
\hline
28 & 1 & Perfect \\
\hline
36 & 1 & $\{4,9,6\}$ \\
\hline
60 & 1 & $\{2,30,6,10\}$ \\
\hline
84 & 1 & $\{3,28,4,21\}$ \\
\hline
90 & 1 & $\{3,30,6,15\}$ \\
\hline
120 & 1 & $\{3,40,5,24,6,20,10,12\}$ \\
\hline

156 & 2 &
$\{2,78\}$ \newline
$\{3,52,12,13\}$ \\
\hline

210 & 1 &
$\{5,42,6,35,7,30,10,21\}$ \\
\hline

216 & 3 &
$\{2,108,4,54\}$ \newline
$\{3,72,4,54,8,27\}$ \newline
$\{4,54,6,36,8,27,9,24\}$ \\
\hline

240 & 3 &
$\{2,120,4,60,6,40,12,20\}$ \newline
$\{4,60,5,48,6,40,8,30,12,20,15,16\}$ \newline
$\{3,80,6,40,8,30,10,24,12,20,15,16\}$ \\
\hline

252 & 1 &
$\{3,84,4,63,9,28,12,21\}$ \\
\hline

270 & 1 &
$\{5,54,6,45,10,27,15,18\}$ \\
\hline

300 & 1 &
$\{2,150,4,75,12,25\}$ \\
\hline

312 & 1 &
$\{2,156,6,52\}$ \\
\hline

330 & 1 &
$\{2,165,15,22\}$ \\
\hline

336 & 5 &
$\{2,168,4,84,6,56\}$ \newline
$\{3,112,4,84,6,56,7,48\}$ \newline
$\{2,168,6,56,8,42,14,24\}$ \newline
$\{3,112,6,56,7,48,8,42,14,24\}$ \newline
$\{4,84,6,56,7,48,12,28,14,24,16,21\}$ \\
\hline

352 & 1 &
$\{8,44\}$ \\
\hline

396 & 2 &
$\{2,198,9,44,11,36\}$ \newline
$\{3,132,6,66,9,44,18,22\}$ \\
\hline

420 & 4 &
$\{3,140,4,105,5,84,7,60,10,42,14,30\}$ \newline
$\{3,140,4,105,5,84,6,70,14,30,15,28\}$ \newline
$\{2,210,5,84,7,60,10,42,15,28,20,21\}$ \newline
$\{2,210,4,105,10,42,12,35,15,28,20,21\}$ \\
\hline

\end{tabular}
\end{table}

\begin{table}[H]
\centering
\caption{The first ten odd primitive non-deficient strongly pseudoperfect numbers and their omitted divisors.}
\label{tab:odd-primitive-solutions}
\small
\begin{tabular}{|c|c|p{11cm}|}
\hline
\textbf{$n$} & \textbf{\#} & \textbf{Omitted divisors} \\
\hline

153153 & 2 &
$\{51,3003,63,2431,221,693,231,663,273,561\}$ \newline
$\{91,1683,119,1287,143,1071,153,1001,221,693,231,663,273,561\}$ \\
\hline

182325 & 6 &
$\{55,3315,65,2805,85,2145,255,715,325,561\}$ \newline
$\{75,2431,85,2145,143,1275,165,1105,221,825,255,715,325,561\}$ \newline
$\{65,2805,85,2145,165,1105,187,975,255,715,275,663,325,561\}$ \newline
$\{65,2805,85,2145,143,1275,221,825,255,715,275,663,425,429\}$ \newline
$\{75,2431,85,2145,143,1275,165,1105,187,975,325,561,425,429\}$ \newline
$\{65,2805,85,2145,143,1275,195,935,275,663,325,561,425,429\}$ \\
\hline

218025 & 3 &
$\{45,4845,57,3825,153,1425\}$ \newline
$\{45,4845,135,1615,153,1425,225,969,425,513\}$ \newline
$\{51,4275,75,2907,255,855,323,675,459,475\}$ \\
\hline

255645 & 1 &
$\{95,2691,115,2223,117,2185,171,1495,207,1235,285,897,299,855\}$ \\
\hline

263925 & 2 &
$\{75,3519,85,3105,425,621\}$ \newline
$\{115,2295,153,1725,225,1173,345,765,459,575\}$ \\
\hline

272745 & 2 &
$\{33,8265,95,2871,145,1881,209,1305,261,1045\}$ \newline
$\{45,6061,99,2755,145,1881,209,1305,261,1045,285,957,435,627\}$ \\
\hline

309225 & 4 &
$\{57,5425,93,3325,95,3255,105,2945,465,665\}$ \newline
$\{35,8835,93,3325,217,1425,285,1085,465,665\}$ \newline
$\{57,5425,93,3325,133,2325,217,1425,399,775,465,665,475,651\}$ \newline
$\{75,4123,93,3325,105,2945,155,1995,285,1085,465,665,525,589\}$ \\
\hline

330165 & 10 &
$\{69,4785,115,2871,145,2277,207,1595,261,1265\}$ \newline
$\{45,7337,69,4785,319,1035\}$ \newline
$\{69,4785,99,3335,145,2277,261,1265,319,1035\}$ \newline
$\{115,2871,145,2277,165,2001,207,1595,253,1305,319,1035,345,957\}$ \newline
$\{87,3795,165,2001,207,1595,253,1305,261,1265,319,1035,345,957\}$ \newline
$\{69,4785,207,1595,253,1305,261,1265,319,1035,345,957,435,759\}$ \newline
$\{69,4785,115,2871,145,2277,165,2001,495,667\}$ \newline
$\{69,4785,87,3795,165,2001,261,1265,495,667\}$ \newline
$\{45,7337,165,2001,261,1265,319,1035,495,667\}$ \newline
$\{69,4785,165,2001,253,1305,319,1035,345,957,435,759,495,667\}$ \\
\hline

344565 & 1 &
$\{117,2945,155,2223,247,1395,285,1209,585,589\}$ \\
\hline

347985 & 3 &
$\{33,10545,99,3515,495,703\}$ \newline
$\{57,6105,95,3663,171,2035,185,1881,495,703\}$ \newline
$\{57,6105,171,2035,209,1665,285,1221,407,855,495,703,555,627\}$ \\
\hline

\end{tabular}
\end{table}

Code and further data can be found at the GitHub repository:
\href{https://github.com/audreywang0/Strongly-Pseudoperfect}{https://github.com/audreywang0/Strongly-Pseudoperfect}

\textbf{Leiden Declaration Disclosure:} The family from Proposition \ref{Claude's family} was discovered by Anthropic's Claude when the second author gave it an incorrect version of Proposition \ref{3 times power of 2 times p and four omitted}. Claude noticed an error in the argument, and pointed out that the result was false due to the family in question. No other use of AI occurred in the research for this paper.

\end{document}